\let\vec\relax
\DeclareMathAccent{\vec}{\mathord}{letters}{"7E}
\journalname{}
\date{ \phantom{b} \vspace{45mm}\phantom{e}}
\newcommand*{\dt}[1]{ \accentset{\mbox{\large\bfseries .}}{#1}}
\DeclareMathOperator{\bigtimes}{{\hbox{\large\sf X}}}
\def\R{{\mathbb R}}
\def\eps{\varepsilon}
\def\P{\mathrm{P}}
\newcommand\bfr{{\mathbf r}}
\newcommand\bfI{{\mathbf I}}
\newcommand\bfA{{\mathbf A}}
\newcommand\bfD{{\mathbf D}}
\newcommand\bfF{{\mathbf F}}
\newcommand\bfG{{\mathbf G}}
\newcommand\bfK{{\mathbf K}}
\newcommand\bfL{{\mathbf L}}
\newcommand\bfM{{\mathbf M}}
\newcommand\bfN{{\mathbf N}}
\newcommand\bfR{{\mathbf R}}
\newcommand\bfS{{\mathbf S}}
\newcommand\bfU{{\mathbf U}}
\newcommand\bfV{{\mathbf V}}
\newcommand\bfY{{\mathbf Y}}
\newcommand\bfZ{{\mathbf Z}}
\def\eps{\varepsilon}
\def\phi{\varphi}
\author{Gianluca Ceruti, Christian Lubich}
\title{An unconventional robust integrator for \\dynamical low-rank approximation}
\date{}
\institute{G. Ceruti and Ch. Lubich \at
	Mathematisches Institut, Universit{\"a}t T{\"u}bingen, Auf der Morgenstelle 10, D-72076 T{\"u}bingen, Germany.
	\email{\{ceruti, lubich\}@na.uni-tuebingen.de}           
}
\begin{document}
	\maketitle
	
	\begin{abstract} We propose and analyse a numerical integrator that computes a low-rank approximation to large time-depen\-dent matrices that are either given explicitly via their increments or are the unknown solution to a matrix differential equation.  Furthermore, the integrator is extended to the approximation of  time-dependent tensors by Tucker tensors of fixed multilinear rank. The proposed low-rank integrator is different from the known projector-splitting integrator for dynamical low-rank approximation, but it retains the important robustness to small singular values that has so far been known only for the projector-splitting integrator. The new integrator also offers some potential advantages over the projector-splitting integrator: It avoids the backward time integration substep of the projector-splitting integrator, which is a potentially unstable substep for dissipative problems. It offers more parallelism, and it preserves symmetry or anti-symmetry of the matrix or tensor when the differential equation does.
	Numerical experiments illustrate the behaviour of the proposed integrator.
	\keywords{dynamical low-rank approximation \and structure-preserving integrator \and matrix and tensor differential equations \and Tucker tensor format}
	\subclass{65L05 \and 65L20 \and 65L70 \and 15A69}
\end{abstract}

	\section{Introduction}
	For the approximation of  huge time-dependent matrices (or tensors) that are the solution to a matrix differential equation, dynamical low-rank approximation \cite{KochLubich07,KochLubich10} projects the right-hand side function of the differential equation to the tangent space of matrices (or tensors) of a fixed rank at the current approximation. This yields differential equations for the factors of an SVD-like decomposition of the time-dependent low-rank approximation. The direct numerical integration of these differential equations by standard methods such as explicit or implicit Runge--Kutta methods is highly problematic because in the typical presence of small singular values in the approximation, it leads to severe step size restrictions proportional to the smallest nonzero singular value. This difficulty does not arise with the projector-splitting integrator proposed in \cite{LubichOseledets}, which foregoes a direct time discretization of the differential equations for the factors and instead splits the orthogonal projection onto the tangent space, which is an alternating sum of subprojections. This approach leads to an efficiently implementable integrator that is robust to small singular values \cite{KieriLubichWalach,LubichOseledets}. It has been extended, together with its robustness properties, from the matrix case to Tucker tensors in \cite{Lubich:MCTDH,LubichVandWalach}, to tensor trains / matrix product states in \cite{LuOV15,HaLOVV16}, and to general tree tensor networks in~\cite{CeLW20}.
	
	In the present paper we propose and analyse a different integrator that is shown to have the same robust error behaviour as the projector-splitting integrator.
	This new integrator can apparently not be interpreted as a splitting integrator or be included in another familiar class of integrators. Its substeps look formally similar to those of the projector-splitting integrator but are arranged in a different, less sequential way. 
Like in the projector-splitting integrator, the differential equations in the substeps are linear if the original differential equation is linear, even though the projected differential equation becomes nonlinear.
The new integrator bears some similarity also to the constant-mean-field integrator of \cite{BeckMeyer} and the splitting integrator of \cite{KhoromskijOseledetsSchneider}. 
	
	Beyond the robustness to small singular values, the new integrator has some favourable further properties that are not shared with the projector-splitting integrator. Maybe most importantly, it has no backward time integration substep as in the projector-splitting integrator. This appears advantageous in strongly dissipative problems, where the backward time integration step represents an unstable substep. Moreover, the new integrator has enhanced parallelism in its substeps, and in the Tucker tensor case even a reduced serial computational cost. It preserves symmetry or anti-symmetry of the matrix or tensor when the differential equation does. It reduces to the (anti-)symmetry-preserving low-rank integrator of \cite{CeL20} in this case.
	
	On the other hand, unlike the projector-splitting integrator it cannot be efficiently extended to a time-reversible integrator. When applied to the time-dependent Schr\"odinger equation (as an integrator for the MCTDH method of quantum molecular dynamics; cf.~\cite{Beck-etal:MCTDH,BeckMeyer,Lubich:MCTDH}),
the new integrator preserves the norm, but it has no energy conservation as shown in \cite{Lubich:MCTDH} for the projector-splitting integrator. 	
	
	In Section 2 we recapitulate dynamical low-rank approximation and the projector-splitting integrator for the matrix case. We restate its exactness property and its robust error bound.
	
	In Section 3 we present the new low-rank matrix integrator and show that it has the same exactness property and robust error bound as the matrix projector-splitting integrator.
	
	In Section 4 we recapitulate dynamical low-rank approximation by Tucker tensors of fixed multilinear rank and the extension of the projector-splitting integrator to the Tucker tensor case.
	
	In Section 5 we present the new low-rank Tucker tensor integrator and show that it has the same exactness property and robust error bound as the Tucker tensor projector-splitting integrator.
	
	In Section 6 we illustrate the behaviour of the new low-rank matrix and Tucker tensor integrators by numerical experiments.
	
	While we describe the integrator for real matrices and tensors, the algorithm and its properties extend in a straightforward way to complex matrices and tensors,   
requiring only some care in using transposes $\bfU^\top$ versus adjoints $\bfU^*=\overline \bfU^\top$.
	
Throughout the paper, we use the convention to denote matrices by boldface capital letters and tensors by italic capital letters.

	\section{Recap: the matrix projector-splitting integrator}
	Dynamical low-rank approximation of time-dependent matrices \cite{KochLubich07} replaces the exact solution $\bfA(t)\in\R^{m\times n}$ of a (too large) matrix differential equation
		\begin{equation} \label{eq:fullEq-mat}
	\dt{\bfA}(t) = \bfF(t, \bfA(t)), 
	\qquad
	\bfA(t_0) = \bfA_0 
	\end{equation}
	by the solution $\bfY(t)\in\R^{m\times n}$ of rank $r$ of the differential equation projected to the tangent space of the manifold of rank-$r$ matrices at the current approximation,
	\begin{equation} \label{eq:projEq}
	\dt{\bfY}(t) = \P(\bfY(t))\bfF(t, \bfY(t)),
	\qquad
	\bfY(t_0) = \bfY_0,
	\end{equation}
	where the initial rank-$r$ matrix $\bfY_0$ is typically obtained from a truncated singular value decomposition (SVD) of $\bfA_0 $.
	(We note that $\bfF(t,\bfY) = \dt\bfA(t)$ if $\bfA(t)$ is given explicitly.) For the actual computation with rank-$r$ matrices, they are represented in a non-unique factorized SVD-like form
	\begin{equation} \label{USV}
	\bfY(t) = \bfU(t)\bfS (t)\bfV(t)^\top, 
	\end{equation}
	where the slim matrices $\bfU(t)\in \R^{m\times r}$ and $\bfV(t)\in \R^{n\times r}$ each have $r$ orthonormal columns, 
	and the small matrix $\bfS(t)\in \R^{r\times r}$ is invertible.

	The orthogonal tangent space projection $\P(\bfY)$ can be written explicitly as  an alternating sum of three subprojections onto the co-range, the intersection of co-range and range, and the range of the rank-$r$ matrix $\bfY$ \cite{KochLubich07}. The projector-splitting integrator of \cite{LubichOseledets} splits the right-hand side of \eqref{eq:projEq} according to the three subprojections in the stated ordering and solves the subproblems consecutively in the usual way of a Lie--Trotter or Strang splitting. This approach yields an efficient time-stepping algorithm that updates the factors in the SVD-like decomposition of the rank-$r$ matrices in every time step, alternating between solving differential equations for matrices of the dimension of the factor matrices and orthogonal decompositions of slim matrices.

	One time step from  $t_0$ to $t_1=t_0+h$  starting from a factored rank-$r$ matrix 
	$\bfY_0=\bfU_0\bfS_0\bfV_0^\top$ proceeds as follows:
	
	\begin{enumerate}
		\item 
		\textbf{K-step} : Update $ \bfU_0 \rightarrow \bfU_1, \bfS_0 \rightarrow \hat{\bfS}_1$ \\
		Integrate from $t=t_0$ to $t_1$ the $m \times r$ matrix differential equation
		$$ \dot{\textbf{K}}(t) = \bfF(t, \textbf{K}(t) \bfV_0^\top) \bfV_0, \qquad \textbf{K}(t_0) = \bfU_0 \bfS_0.$$
		Perform a QR factorization $\textbf{K}(t_1) = \bfU_1 \hat{\bfS}_1$. 
		
		\item
		\textbf{S-step} : Update $ \hat{\bfS}_1 \rightarrow \tilde{\bfS}_0$ \\
		Integrate from $t=t_0$ to $t_1$ the $r \times r$ matrix differential equation
		$$ \dot{\bfS}(t) = - \bfU_1^\top \bfF(t, \bfU_1 \bfS(t) \bfV_0^\top) \bfV_0, \qquad \bfS(t_0) = \hat{\bfS}_1,$$
		and set $\tilde{\bfS}_0 =\bfS(t_1)$.
		
		\item
		\textbf{L-step} : Update $ \bfV_0 \rightarrow \bfV_1, \tilde{\bfS}_0 \rightarrow \bfS_1$ \\
		Integrate from $t=t_0$ to $t_1$ the $n \times r$ matrix differential equation
		$$  \dot{\textbf{L}}(t) =\bfF(t, \bfU_1 \textbf{L}(t)^\top)^\top  \bfU_1, \qquad \textbf{L}(t_0) = \bfV_0 \tilde{\bfS}_0^\top. $$
		Perform a QR factorization $\textbf{L}(t_1) = \bfV_1 \bfS_1^\top$.
	\end{enumerate} 
	
	Then, the approximation after one time step is given by 
	$$ \bfY_1 = \bfU_1 \bfS_1 \bfV_1^\top .$$
	To proceed further,  $\bfY_1$ is taken as the starting value for the next step, and so on.

	The projector-splitting integrator has very favourable properties. First, it reproduces rank-$r$ matrices exactly.

	\begin{theorem}[{Exactness property, \cite[Theorem 4.1]{LubichOseledets}}]
		\label{thm:proj-split-exact}
		Let $\bfA(t) \in \mathbb{R}^{m \times n}$ be of rank~$r$  for $t_0 \leq t \leq t_1$,
		so that $\bfA(t)$ has a factorization \eqref{USV}, $\bfA(t)=\bfU(t)\bfS(t)\bfV(t)^\top$. 
		Moreover, assume  that the $r\times r$ matrices  $\bfU(t_1)^\top \bfU(t_0)$ and $\bfV(t_1)^\top \bfV(t_0)$ are invertible. With $\bfY_0 = \bfA(t_0)$, the projector-splitting integrator for 
		$\dt\bfY(t)=\P(\bfY(t))\dt \bfA(t)$ is then exact: $ \bfY_1 = \bfA(t_1)$.
	\end{theorem}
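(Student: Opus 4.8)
The plan is to exploit that, in the situation of the theorem, the right-hand side $\bfF(t,\bfY)=\dt\bfA(t)$ does not depend on $\bfY$, so that each of the three substep differential equations is solved by a single integration of $\dt\bfA$ over $[t_0,t_1]$ against the appropriate fixed factor. The proof then reduces to a chain of telescoping cancellations; the only genuine ingredient is a range identity where the invertibility hypotheses enter. Throughout I would identify, without loss of generality, the starting factors with the factors of \eqref{USV} at $t_0$, i.e.\ $\bfU_0=\bfU(t_0)$, $\bfS_0=\bfS(t_0)$, $\bfV_0=\bfV(t_0)$; any other orthonormal factorization of $\bfA(t_0)$ differs only by orthogonal transformations of the factors and leaves both the hypotheses and the conclusion unchanged.

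For the K-step, since $\bfF(t,\bfK(t)\bfV_0^\top)\bfV_0=\dt\bfA(t)\bfV_0$, integration gives $\bfK(t_1)=\bfU_0\bfS_0+\bigl(\bfA(t_1)-\bfA(t_0)\bigr)\bfV_0$, and from $\bfA(t_0)=\bfU_0\bfS_0\bfV_0^\top$ together with $\bfV_0^\top\bfV_0=\bfI$ we get $\bfU_0\bfS_0=\bfA(t_0)\bfV_0$, hence $\bfK(t_1)=\bfA(t_1)\bfV_0$. Here is the crucial point: writing $\bfA(t_1)\bfV_0=\bfU(t_1)\,\bigl(\bfS(t_1)\,\bfV(t_1)^\top\bfV_0\bigr)$ and using that $\bfS(t_1)$ and $\bfV(t_1)^\top\bfV(t_0)$ are invertible $r\times r$ matrices, the columns of $\bfA(t_1)\bfV_0$ span $\mathrm{range}(\bfU(t_1))=\mathrm{range}(\bfA(t_1))$. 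Consequently the factor $\bfU_1$ from the QR decomposition $\bfK(t_1)=\bfU_1\hat\bfS_1$ also spans $\mathrm{range}(\bfA(t_1))$, so the orthogonal projector $\bfU_1\bfU_1^\top$ satisfies $\bfU_1\bfU_1^\top\bfA(t_1)=\bfA(t_1)$; moreover $\hat\bfS_1=\bfU_1^\top\bfK(t_1)=\bfU_1^\top\bfA(t_1)\bfV_0$.

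For the remaining two substeps, integrating $\dt\bfS=-\bfU_1^\top\dt\bfA\,\bfV_0$ from $\hat\bfS_1$ gives, by the same telescoping, $\tilde\bfS_0=\hat\bfS_1-\bfU_1^\top\bigl(\bfA(t_1)-\bfA(t_0)\bigr)\bfV_0=\bfU_1^\top\bfA(t_0)\bfV_0$. Integrating $\dt\bfL=\dt\bfA^\top\bfU_1$ from $\bfL(t_0)=\bfV_0\tilde\bfS_0^\top$ gives $\bfL(t_1)=\bfV_0\tilde\bfS_0^\top+\bigl(\bfA(t_1)-\bfA(t_0)\bigr)^\top\bfU_1$; since $\bfV_0\tilde\bfS_0^\top=\bfV_0\bfV_0^\top\bfA(t_0)^\top\bfU_1=\bfA(t_0)^\top\bfU_1$, using $\mathrm{range}(\bfA(t_0)^\top)=\mathrm{range}(\bfV_0)$, this telescopes to $\bfL(t_1)=\bfA(t_1)^\top\bfU_1$. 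Finally, with the QR decomposition $\bfL(t_1)=\bfV_1\bfS_1^\top$ we obtain $\bfS_1\bfV_1^\top=\bfL(t_1)^\top=\bfU_1^\top\bfA(t_1)$, whence $\bfY_1=\bfU_1\bfS_1\bfV_1^\top=\bfU_1\bfU_1^\top\bfA(t_1)=\bfA(t_1)$, which is the claim.

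The main obstacle is really only the range identity in the K-step — recognizing that $\bfK(t_1)=\bfA(t_1)\bfV_0$ retains the full range of $\bfA(t_1)$, which is exactly where invertibility of $\bfV(t_1)^\top\bfV(t_0)$ is used; once this is in place the remaining steps are mechanical. Along the way one should also check that the hypotheses make the integrator well defined on the step, i.e.\ that the QR factorizations are applied to matrices of full column rank and that $\hat\bfS_1,\tilde\bfS_0,\bfS_1$ are invertible; this is where the invertibility of $\bfU(t_1)^\top\bfU(t_0)$ is used.
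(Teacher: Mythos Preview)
Your proof is correct. Note, however, that the paper does not itself prove Theorem~\ref{thm:proj-split-exact}; it is restated from the cited reference as background in the recap section, so there is no in-paper proof to compare against directly. That said, the key ingredient you single out --- computing $\bfK(t_1)=\bfA(t_1)\bfV_0$ and deducing the range identity $\bfU_1\bfU_1^\top\bfA(t_1)=\bfA(t_1)$ from the invertibility of $\bfV(t_1)^\top\bfV(t_0)$ --- is exactly what the paper isolates and proves as Lemma~\ref{lem:Klem-exact}, the lemma underpinning the exactness proof for the \emph{new} integrator (Theorem~\ref{thm:new-exact}); so where the arguments overlap, yours and the paper's are in full agreement.
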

	
Even more remarkable, the algorithm is robust to the presence of small singular values of the solution or its approximation, as opposed to standard integrators applied to \eqref{eq:projEq} or the equivalent differential equations for the factors $\bfU(t)$, $\bfS(t)$, $\bfV(t)$, which contain a factor $\bfS(t)^{-1}$ on the right-hand sides \cite[Prop.\,2.1]{KochLubich07}. 
	The appearance of small singular values is ubiquitous in low-rank approximation, because the smallest singular value retained in the approximation cannot be expected to be much larger than the largest discarded singular value of the solution, which is required to be small for good accuracy of the low-rank approximation.
	
	\begin{theorem}[{Robust error bound, \cite[Theorem 2.1]{KieriLubichWalach}}]
		\label{thm:proj-split-robust}
		Let $\bfA(t)$ denote the solution of the matrix differential equation \eqref{eq:fullEq-mat}. Assume that  the following conditions hold in the Frobenius norm $\|\cdot\|=\|\cdot\|_F$:
		\begin{enumerate}
			\item 
			$\bfF$ is Lipschitz-continuous and bounded: for all $\bfY, \widetilde{\bfY} \in \mathbb{R}^{m \times n}$ and $0\le t \le T$,
			$$ 
			\| \bfF(t, \bfY) - \bfF(t, \widetilde{\bfY}) \| 
			\leq
			L \| \bfY - \widetilde{\bfY} \|,
			\qquad
			\| \bfF(t, \bfY) \| \leq B \ .
			$$
			
			\item
			The non-tangential part of $\bfF(t, \bfY)$ is $\varepsilon$-small:
			$$
			\| (\bfI - \P(\bfY)) \bfF(t, \bfY) \| \le \eps
			$$
			for all $\bfY \in \mathcal{M}$ in a neighbourhood of $\bfA(t)$ and $0\le t \le T$.
			
			\item
			The error in the initial value is $\delta$-small:
			$$
			\| \bfY_0 - \bfA_0 \| \le \delta.
			$$
		\end{enumerate}	
		Let $\bfY_n$ denote the rank-$r$ approximation to $\bfA(t_n)$ at $t_n=nh$ obtained after n steps of the projector-splitting integrator with step-size $h>0$.
		Then, the error satisfies for all $n$ with $t_n =  nh \leq T$
		$$ \| \bfY_n - \bfA(t_n) \| \leq c_0\delta + c_1 \varepsilon + c_2 h ,$$	
		where the constants $c_i$ only depend on $L, B,$ and $T$. In particular, the constants are independent of singular values of the exact or approximate solution. 
	\end{theorem}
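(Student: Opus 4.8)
\medskip
\noindent\emph{Proof idea.} (This restates \cite[Theorem~2.1]{KieriLubichWalach}.) The plan is the familiar split into a one-step (local) error bound and an error-propagation (stability) estimate, combined by a discrete Gronwall inequality in the manner of Lady Windermere's fan. The feature to watch throughout is that every estimate will be made on the level of the full $m\times n$ matrices, or of slim matrices with orthonormal columns; the ill-conditioned factor $\bfS(t)^{-1}$ that appears in the differential equations for $\bfU,\bfS,\bfV$ is never used, and this is the source of the singular-value independence of the constants.

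\noindent\emph{Step 1 (continuous robustness of the projected flow).} Let $\widehat\bfY(t)$ solve the projected equation $\dt{\widehat\bfY}=\P(\widehat\bfY)\bfF(t,\widehat\bfY)$ with rank-$r$ initial value $\widehat\bfY(t_0)=\bfY_0$ and $\|\bfY_0-\bfA(t_0)\|\le\vartheta$. As long as $\widehat\bfY(t)$ stays in the tube around $\bfA(\cdot)$ in which assumption~2 holds, one has $\|\widehat\bfY(t)-\bfA(t)\|\le\e^{L(t-t_0)}\vartheta+(\e^{L(t-t_0)}-1)\,\eps/L$: indeed $\dt{\widehat\bfY}-\dt\bfA=-(\bfI-\P(\widehat\bfY))\bfF(t,\widehat\bfY)+\big(\bfF(t,\widehat\bfY)-\bfF(t,\bfA)\big)$, so $\tfrac{\d}{\d t}\|\widehat\bfY-\bfA\|\le\eps+L\|\widehat\bfY-\bfA\|$ by assumptions~1 and~2, and Gronwall finishes.

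\noindent\emph{Step 2 (robust local error).} One step of the integrator from a rank-$r$ matrix $\bfY_0$ yields $\bfY_1$ with $\|\bfY_1-\widehat\bfY(t_1)\|\le C(h^2+h\eps)$, $C=C(L,B)$, where $\widehat\bfY$ is as in Step~1. Here the exactness property, Theorem~\ref{thm:proj-split-exact}, is the lever: applied to the rank-$r$ curve $\widehat\bfY(\cdot)$ it says that the integrator applied to the \emph{linear surrogate} equation $\dt\bfY=\P(\bfY)\bfB(t)$ with $\bfB(t):=\dt{\widehat\bfY}(t)=\P(\widehat\bfY(t))\bfF(t,\widehat\bfY(t))$, started from $\bfY_0$, returns $\widehat\bfY(t_1)$ \emph{exactly} (the invertibility hypotheses this needs are discussed below). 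It then remains to compare the true integrator (substeps driven by $\bfF(t,\cdot)$ composed with the current factor structure) with this surrogate integrator (identical substeps, driven by the matrix $\bfB(t)$). On the short trajectory of one substep the two right-hand sides differ by at most $L\cdot\bigo(h)+\eps$ — for the K-step, for instance, $\|\bfF(t,\bfK(t)\bfV_0^\top)\bfV_0-\bfB(t)\bfV_0\|\le\|\bfF(t,\bfK(t)\bfV_0^\top)-\bfF(t,\widehat\bfY(t))\|+\|(\bfI-\P(\widehat\bfY(t)))\bfF(t,\widehat\bfY(t))\|\le L\big(\|\bfK(t)\bfV_0^\top-\bfY_0\|+\|\bfY_0-\widehat\bfY(t)\|\big)+\eps\le 2LBh+\eps$, using $\|\bfV_0\|=1$ and the bound $B$ on the derivatives of $\bfK(t)\bfV_0^\top$ and of $\widehat\bfY(t)$ — and a Gronwall estimate over the step of length $h$ turns this into an $\bigo(h^2+h\eps)$ discrepancy of the substep outputs; the S- and L-steps are treated in the same way. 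Combining Steps~1 and~2 through $\|\bfY_n-\bfA(t_n)\|\le\|\bfY_n-\widehat\bfY_{n-1}(t_n)\|+\|\widehat\bfY_{n-1}(t_n)-\bfA(t_n)\|$, with $\widehat\bfY_{n-1}$ the projected solution on $[t_{n-1},t_n]$ started from $\bfY_{n-1}$, gives $\|\bfY_n-\bfA(t_n)\|\le\e^{Lh}\|\bfY_{n-1}-\bfA(t_{n-1})\|+C(h^2+h\eps)+C'h\eps$, and summing over $n$ with $t_n=nh\le T$ yields $\|\bfY_n-\bfA(t_n)\|\le c_0\delta+c_1\eps+c_2h$ with $c_i$ depending only on $L,B,T$.

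\noindent\emph{Where the difficulty sits.} The delicate part is entirely within Step~2. Each substep performs a QR factorization — $\bfK(t_1)=\bfU_1\hat\bfS_1$, $\bfL(t_1)=\bfV_1\bfS_1^\top$ — and precisely in the regime this theorem is about, $\bfK(t_1)$ (or $\bfL(t_1)$) may have tiny singular values, so the factorization is ill-conditioned: the orthonormal factors $\bfU_1$ computed in the true and in the surrogate run need not be $\bigo(h)$-close even though the products $\bfK(t_1)$ are $\bigo(h^2+h\eps)$-close, and, relatedly, the invertibility of $\widehat\bfU(t_1)^\top\widehat\bfU(t_0)$ and $\widehat\bfV(t_1)^\top\widehat\bfV(t_0)$ that Theorem~\ref{thm:proj-split-exact} requires for the surrogate curve is not automatic for small $h$ (it would demand $h\lesssim\sigma_{\min}$). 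Both issues must be resolved by carrying the whole comparison at the level of the \emph{products} ($\bfU_1\hat\bfS_1=\bfK(t_1)$, then $\bfU_1\bfS(t)\bfV_0^\top$, and so on), which move by only $\bigo(h)$ whatever the singular values, together with the observation that a large rotation of a factor subspace can only concern the small-singular-value part of $\bfY$, which carries weight $\bigo(\sigma_{\min})$, so its effect on $\bfY$ is $\bigo(\sigma_{\min}\cdot h/\sigma_{\min})=\bigo(h)$. Making this quantitative along the composition of the three substeps — and in particular checking that $\sigma_{\min}$ never enters a denominator of the final bound — is the real content; the rest is routine Lipschitz and Gronwall bookkeeping.
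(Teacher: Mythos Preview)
The paper does not prove this theorem: it is quoted verbatim from \cite[Theorem 2.1]{KieriLubichWalach} as background for the new results, and no proof is given here. So there is no ``paper's own proof'' to compare against; what you have written is a sketch of the argument in the cited reference.

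As an outline of that argument your Steps~1 and~3 (continuous perturbation bound for the projected flow, and Lady Windermere's fan to pass from local to global) are correct and routine. The structure of Step~2 is also right in spirit: the exactness property is indeed the lever for the local error, and one compares the integrator driven by $\bfF$ with the integrator driven by $\dt{\widehat\bfY}(t)$ on the exact rank-$r$ trajectory.

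However, your own final paragraph pinpoints a genuine gap that your sketch does not close. You invoke Theorem~\ref{thm:proj-split-exact} on the surrogate, but that theorem needs $\widehat\bfU(t_1)^\top\widehat\bfU(t_0)$ and $\widehat\bfV(t_1)^\top\widehat\bfV(t_0)$ invertible, and you correctly observe that this fails precisely in the small-singular-value regime the theorem is about. Saying ``both issues must be resolved by carrying the whole comparison at the level of the products'' is accurate as a diagnosis but is not a proof: it is exactly the technical core of \cite{KieriLubichWalach}, where one tracks $\bfK(t)\bfV_0^\top$, $\bfU_1\bfS(t)\bfV_0^\top$, $\bfU_1\bfL(t)^\top$ through all three substeps and shows that the accumulated defects stay $O(h^2+h\eps)$ at the level of these full $m\times n$ products, never separating the QR factors. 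Your heuristic that a large rotation of the basis ``concerns only the small-singular-value part'' and hence contributes $O(\sigma_{\min}\cdot h/\sigma_{\min})$ is suggestive but is not how the actual argument runs, and would need substantial work to be made rigorous. As written, your Step~2 is an outline with the hard part left as a placeholder.
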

	
	 In \cite[Section 2.6.3]{KieriLubichWalach} it is shown that an inexact solution of the matrix differential equations in the projector-splitting integrator
	leads to an additional error that is bounded in terms of the local errors in the inexact substeps, again with constants that do not depend on small singular values.
	
	Numerical experiments with the matrix projector-splitting integrator and comparisons with standard numerical integrators are reported in \cite{LubichOseledets,KieriLubichWalach}. 
	
	\section{A new robust low-rank matrix integrator}
	\label{sec:Par}
		We now present a different integrator that has the same exactness and robustness properties as the projector-splitting integrator but which differs in the following favourable properties:
	\begin{enumerate}
	\item The solution of the differential equations for the $m\times r$ and $n\times r$ matrices can be done in parallel, and also the two QR decompositions can be done in parallel.
	\item The differential equation for the small $r\times r$ matrix is solved forward in time, not backwards.
	\item The integrator preserves (skew-)symmetry if the differential equation does.
	\end{enumerate}
While item 1.~can clearly speed up the computation, item 2.~is of interest for strongly dissipative problems, for which the $S$-step in the projector-splitting algorithm with the minus sign in the differential equations is an unstable substep of the algorithm. This  does not appear in the new algorithm. We mention that in \cite{Bachmayer-etal:parabolic}, the problem of the backward substep for parabolic problems
has recently been addressed in a different way.

On the other hand, contrary to the projector-splitting integrator, there is apparently no efficient way to construct a time-reversible integrator from this new integrator.

\subsection{Formulation of the algorithm} 
\label{subsec:alg-newint}
	One time step of integration from time $t_0$ to $t_1=t_0+h$  starting from a factored rank-$r$ matrix 
	$\bfY_0=\bfU_0\bfS_0\bfV_0^\top$ computes an updated rank-$r$ factorization $\bfY_1=\bfU_1\bfS_1\bfV_1^\top$ as follows.
	
	\begin{enumerate}
		\item 
		Update $ \bfU_0 \rightarrow \bfU_1$ and $ \bfV_0 \rightarrow \bfV_1$ in parallel:
		\\[2mm]
		\textbf{K-step}:
		Integrate from $t=t_0$ to $t_1$ the $m \times r$ matrix differential equation
		$$ \dot{\textbf{K}}(t) = \bfF(t, \textbf{K}(t) \bfV_0^\top) \bfV_0, \qquad \textbf{K}(t_0) = \bfU_0 \bfS_0.$$
		Perform a QR factorization $\textbf{K}(t_1) = \bfU_1 {\bfR}_1$ and compute the $r\times r$ matrix $\bfM= \bfU_1^\top \bfU_0$.
				\\[2mm]
		\textbf{L-step} : 
		Integrate from $t=t_0$ to $t_1$ the $n \times r$ matrix differential equation
		$$  \dot{\textbf{L}}(t) =\bfF(t, \bfU_0 \textbf{L}(t)^\top)^\top  \bfU_0, \qquad \textbf{L}(t_0) = \bfV_0 {\bfS}_0^\top. $$
		Perform a QR factorization $\textbf{L}(t_1) = \bfV_1 \widetilde{\bfR}_1$ and compute the $r\times r$ matrix $\bfN= \bfV_1^\top \bfV_0$.
		
		\item
		Update ${\bfS}_0 \rightarrow {\bfS}_1$\,: \\[2mm]
		\textbf{S-step} :  Integrate from $t=t_0$ to $t_1$ the $r \times r$ matrix differential equation
		$$ \dot{\bfS}(t) =  \bfU_1^\top \bfF(t, \bfU_1 \bfS(t) \bfV_1^\top) \bfV_1, 
		\qquad \bfS(t_0) = \bfM {\bfS}_0 \bfN^\top,
		$$
		and set ${\bfS}_1 =\bfS(t_1)$.

	\end{enumerate} 
	
The $m\times r$, $n\times r$ and $r\times r$ matrix differential equations in the substeps are solved approximately using a standard integrator, e.g., an explicit or implicit Runge--Kutta method or an exponential integrator when $\bfF$ is dominantly linear.

We note that the L-step equals the K-step for the transposed function $\bfG(t, \bfY)=\bfF(t,\bfY^\top)^\top$ and transposed starting values.
Unlike the projector-splitting algorithm, the triangular factors of the QR-decompositions are not reused. The S-step can be viewed as a Galerkin method for the differential equation \eqref{eq:fullEq-mat} in the space of matrices $\bfU_1 \bfS \bfV_1^\top$ generated by the updated basis matrices. In contrast to the projector-splitting integrator, there is no minus sign on the right-hand side of the differential equation for~$\bfS(t)$. We further note that $\bfU_1$ of the new integrator is identical to $\bfU_1$ of the projector-splitting integrator, but $\bfV_1$ is in general different.
%

\begin{remark} There exists a modification where all three differential equations for $\bfK$, $\bfL$ and $\bfS$ can be solved in parallel. That variant solves the K- and L-steps as above, but in the S-step it solves instead the $r \times r$ matrix differential equation 
$$ \dot{\bfS}(t) =  \bfU_0^\top \bfF(t, \bfU_0 \bfS(t) \bfV_0^\top) \bfV_0, 
		\qquad \bfS(t_0) = {\bfS}_0
$$
and finally sets 
$$
{\bfS}_1 =  \bfM^{-\top}\, \bfS(t_1)\, \bfN^{-1}.
$$ 
This modified integrator can be shown to have the same exactness property as proved below for the integrator formulated above, and also  a similar robust error bound  {\it under the condition that} the inverses of the matrices $\bfM$ and $\bfN$ are bounded by a constant. This condition can, however, be guaranteed only for step sizes that are small in comparison to the smallest nonzero singular value. In our numerical experiments this method did not behave as reliably as the method proposed above, and despite its interesting properties it will therefore not be further discussed in the following.
\end{remark}

\subsection{Exactness property and robust error bound}

We will prove the following remarkable results for the integrator of Section~\ref{subsec:alg-newint}.

\begin{theorem}\label{thm:new-exact}
The exactness property of Theorem~$\ref{thm:proj-split-exact}$ holds verbatim also for the new integrator.
\end{theorem}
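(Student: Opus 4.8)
The plan is to exploit the hypothesis that $\bfA(t) = \bfU(t)\bfS(t)\bfV(t)^\top$ has constant rank $r$ on $[t_0,t_1]$, so that $\P(\bfA(t))\dt\bfA(t) = \dt\bfA(t)$, and to track each substep of the new integrator against the exact factors. First I would analyse the K-step. Since $\bfF(t,\bfK(t)\bfV_0^\top)\bfV_0 = \dt\bfA(t)\,\bfV_0$ whenever $\bfK(t)\bfV_0^\top$ sits on the rank-$r$ manifold, the natural candidate solution is $\bfK(t) = \bfA(t)\bfV_0$. One checks $\bfK(t_0) = \bfA(t_0)\bfV_0 = \bfU_0\bfS_0\bfV_0^\top\bfV_0 = \bfU_0\bfS_0$, matching the initial value, and $\dt\bfK(t) = \dt\bfA(t)\bfV_0 = \bfF(t,\bfA(t))\bfV_0$. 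The only thing to verify is that $\bfK(t)\bfV_0^\top = \bfA(t)$ along this trajectory, which holds because $\bfA(t)\bfV_0\bfV_0^\top = \bfA(t)$: indeed $\bfV_0$ spans the co-range of $\bfA(t_0)$, but we actually need it to span the co-range of $\bfA(t)$ for all $t$. This is where the invertibility hypothesis on $\bfV(t_1)^\top\bfV(t_0)$ enters — it forces $\mathrm{range}(\bfV(t)^\top) \subseteq \mathrm{range}(\bfV_0^\top)$ to fail in general, so instead I would argue more carefully: the correct statement is that $\bfK(t_1) = \bfA(t_1)\bfV_0$ has the same range as $\bfU(t_1)$ (using that $\bfV(t_1)^\top\bfV_0$ is invertible, $\bfA(t_1)\bfV_0 = \bfU(t_1)\bfS(t_1)(\bfV(t_1)^\top\bfV_0)$ has full column rank $r$), hence the QR factorization gives $\bfU_1$ with $\mathrm{range}(\bfU_1) = \mathrm{range}(\bfU(t_1))$. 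By symmetry (the L-step is the K-step for the transposed problem), $\mathrm{range}(\bfV_1) = \mathrm{range}(\bfV(t_1))$.

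The subtlety above means the honest line of argument is: the K-differential equation, with right-hand side $\bfF(t,\bfK(t)\bfV_0^\top)\bfV_0$, is \emph{not} literally solved by $\bfA(t)\bfV_0$ unless $\bfA(t)\bfV_0\bfV_0^\top = \bfA(t)$. So I would instead reason about ranges only at the endpoint and defer the exactness to the S-step. Concretely: after the K- and L-steps we have obtained $\bfU_1$ and $\bfV_1$ with $\mathrm{range}(\bfU_1) = \mathrm{range}(\bfU(t_1))$ and $\mathrm{range}(\bfV_1) = \mathrm{range}(\bfV(t_1))$ — this I would establish by the argument sketched above, showing $\bfA(t_1)\bfV_0$ and $\bfA(t_1)^\top\bfU_0$ have the correct ranges thanks to the two invertibility assumptions. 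Then $\bfU_1\bfU_1^\top\bfA(t)\bfV_1\bfV_1^\top = \bfA(t)$ at $t = t_1$ (and in fact for all $t\in[t_0,t_1]$ if one wants, but $t_1$ is what matters). Now turn to the S-step: its right-hand side is $\bfU_1^\top\bfF(t,\bfU_1\bfS(t)\bfV_1^\top)\bfV_1$, and the candidate solution is $\bfS(t) := \bfU_1^\top\bfA(t)\bfV_1$. One verifies $\dt{}(\bfU_1^\top\bfA(t)\bfV_1) = \bfU_1^\top\dt\bfA(t)\bfV_1 = \bfU_1^\top\bfF(t,\bfA(t))\bfV_1$, and since $\bfU_1\bfS(t)\bfV_1^\top = \bfU_1\bfU_1^\top\bfA(t)\bfV_1\bfV_1^\top = \bfA(t)$ by the range identity, this equals $\bfU_1^\top\bfF(t,\bfU_1\bfS(t)\bfV_1^\top)\bfV_1$, so the candidate indeed solves the S-equation.

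It remains to match the initial condition of the S-step: we need $\bfM\bfS_0\bfN^\top = \bfU_1^\top\bfA(t_0)\bfV_1$. Since $\bfM = \bfU_1^\top\bfU_0$, $\bfN = \bfV_1^\top\bfV_0$, and $\bfA(t_0) = \bfU_0\bfS_0\bfV_0^\top$, we get $\bfM\bfS_0\bfN^\top = \bfU_1^\top\bfU_0\bfS_0\bfV_0^\top\bfV_1 = \bfU_1^\top(\bfU_0\bfS_0\bfV_0^\top)\bfV_1 = \bfU_1^\top\bfA(t_0)\bfV_1$, exactly as required. Therefore $\bfS(t_1) = \bfU_1^\top\bfA(t_1)\bfV_1 =: \bfS_1$, and the output is $\bfY_1 = \bfU_1\bfS_1\bfV_1^\top = \bfU_1\bfU_1^\top\bfA(t_1)\bfV_1\bfV_1^\top = \bfA(t_1)$, using the range identity at $t_1$ once more. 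The main obstacle, and the step I would be most careful about, is the range argument for $\bfU_1$ and $\bfV_1$: one must show that the QR factors of $\bfK(t_1)$ and $\bfL(t_1)$ recover $\mathrm{range}(\bfU(t_1))$ and $\mathrm{range}(\bfV(t_1))$, which requires first identifying $\bfK(t_1) = \bfA(t_1)\bfV_0$ and $\bfL(t_1) = \bfA(t_1)^\top\bfU_0$ (this needs a small lemma: the K- and L-differential equations, restricted to initial data whose product lies on the manifold, do keep $\bfK(t)\bfV_0^\top$ equal to the relevant exact quantity — or, more simply, that $\bfA(t)\bfV_0$ and $\bfA(t)^\top\bfU_0$ solve them, which I would check by a direct differentiation using $\dt\bfA = \P(\bfA)\dt\bfA$ together with $\P(\bfA(t))\cdot(\ \cdot\ )\bfV_0$ collapsing appropriately), and then invoking the invertibility hypotheses to conclude these endpoint matrices have full column rank with the desired column spaces.
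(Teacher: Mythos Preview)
Your overall skeleton matches the paper's proof exactly: first show that the K-step produces $\bfU_1$ with $\bfU_1\bfU_1^\top\bfA(t_1)=\bfA(t_1)$ (the paper isolates this as a lemma), then the symmetric statement for $\bfV_1$ via the L-step, then conclude $\bfY_1=\bfU_1\bfU_1^\top\bfA(t_1)\bfV_1\bfV_1^\top=\bfA(t_1)$ from the S-step. Your computation of the S-step initial value $\bfM\bfS_0\bfN^\top=\bfU_1^\top\bfA(t_0)\bfV_1$ is correct and is precisely what the paper uses.

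Where you go astray is in not exploiting that, in the setting of Theorem~\ref{thm:proj-split-exact}, the function is $\bfF(t,\bfY)=\dt\bfA(t)$, \emph{independent of the second argument}. This collapses every substep to a trivial quadrature. The K-equation is simply $\dt\bfK(t)=\dt\bfA(t)\,\bfV_0$, so $\bfK(t_1)=\bfA(t_0)\bfV_0+(\bfA(t_1)-\bfA(t_0))\bfV_0=\bfA(t_1)\bfV_0$, with no need whatsoever for $\bfA(t)\bfV_0\bfV_0^\top=\bfA(t)$. Your statement that ``the K-differential equation is not literally solved by $\bfA(t)\bfV_0$ unless $\bfA(t)\bfV_0\bfV_0^\top=\bfA(t)$'' is therefore false in this context. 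Likewise the S-equation reads $\dt\bfS(t)=\bfU_1^\top\dt\bfA(t)\bfV_1$, so $\bfS_1=\bfU_1^\top\bfA(t_1)\bfV_1$ immediately; you do not need the identity $\bfU_1\bfU_1^\top\bfA(t)\bfV_1\bfV_1^\top=\bfA(t)$ for intermediate $t$, which you invoke but have only established at $t=t_1$ (and which is generally false for $t<t_1$, since $\mathrm{range}(\bfU(t))$ need not equal $\mathrm{range}(\bfU(t_1))$). Once you drop these spurious requirements, the detours in your last two paragraphs disappear and your argument becomes exactly the paper's.
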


\begin{theorem}\label{thm:new-robust}
The robust error bound of Theorem~$\ref{thm:proj-split-robust}$ holds verbatim also for the new integrator.
\end{theorem}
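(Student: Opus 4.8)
The plan is to transfer the proof of Theorem~\ref{thm:proj-split-robust} given in \cite{KieriLubichWalach}, whose only integrator-dependent ingredient is a \emph{robust local error bound}; everything else -- the discrete Gr\"onwall (Lady Windermere's fan) argument that turns local errors into the global estimate, and the bootstrap keeping the numerical iterates in the neighbourhood where hypothesis~2 is in force -- is independent of the particular scheme. Write $\Phi^{t_1,t_0}$ for the exact flow of \eqref{eq:fullEq-mat}, which is $e^{L(t_1-t_0)}$-Lipschitz by hypothesis~1, and $\Psi^{t_1,t_0}$ for the map realising one step of the new integrator on the manifold $\calM$ of rank-$r$ matrices. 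Telescoping $\bfY_n=\Psi^{t_n,t_{n-1}}\!\circ\cdots\circ\Psi^{t_1,t_0}(\bfY_0)$ against $\bfA(t_n)=\Phi^{t_n,t_0}(\bfA_0)$ and using the Lipschitz bound for $\Phi$ yields
\[
 \|\bfY_n-\bfA(t_n)\| \le e^{Lt_n}\,\|\bfY_0-\bfA_0\| + \sum_{j=1}^{n} e^{L(t_n-t_j)}\,d_j ,
 \qquad
 d_j:=\bigl\|\Psi^{t_j,t_{j-1}}(\bfY_{j-1})-\Phi^{t_j,t_{j-1}}(\bfY_{j-1})\bigr\| ,
\]
so it suffices to prove $d_j\le c\,h(\varepsilon+h)$ with $c$ depending only on $L,B,T$, under the inductive hypothesis that $\bfY_{j-1}$ lies in the neighbourhood of $\bfA(t_{j-1})$ appearing in hypothesis~2 (which closes, since $c_0\delta+c_1\varepsilon+c_2h$ may be assumed arbitrarily small, the estimate being trivial otherwise). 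Summing the geometric series then gives $\|\bfY_n-\bfA(t_n)\|\le e^{LT}\delta+c'(\varepsilon+h)$, the assertion.

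For the local error, fix one step from $\bfY_0=\bfU_0\bfS_0\bfV_0^\top\in\calM$ near $\bfA(t_0)$, put $\bfB(t):=\Phi^{t,t_0}(\bfY_0)$, and bound $d=\|\bfY_1-\bfB(t_1)\|$. Three structural facts keep the bound singular-value independent. \emph{(a)}~Each of the $\bfK$-, $\bfL$- and $\bfS$-step differential equations has right-hand side of Frobenius norm at most $B$, hence $\bfK(t_1)=\bfU_0\bfS_0+\bfE$, $\bfL(t_1)=\bfV_0\bfS_0^\top+\bfE'$ and $\bfS(t)=\bfS(t_0)+O(h)$ with $\|\bfE\|,\|\bfE'\|\le Bh$; no factor $\bfS_0^{-1}$ ever occurs. \emph{(b)}~Since $\bfK(t_1)=\bfU_1\bfR_1$ lies in the range of $\bfU_1$, one has $(\bfI-\bfU_1\bfU_1^\top)\bfU_0\bfS_0=-(\bfI-\bfU_1\bfU_1^\top)\bfE$, so $\|(\bfI-\bfU_1\bfU_1^\top)\bfY_0\|\le Bh$, and symmetrically $\|\bfY_0(\bfI-\bfV_1\bfV_1^\top)\|\le Bh$: the updated bases \emph{capture} $\bfY_0$ to within $O(h)$ even though $\bfU_1,\bfV_1$ themselves may be far from $\bfU_0,\bfV_0$ when $\bfS_0$ is ill-conditioned. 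In particular, since $\bfS(t_0)=\bfM\bfS_0\bfN^\top=\bfU_1^\top\bfY_0\bfV_1$, we get $\|\bfU_1\bfS(t_0)\bfV_1^\top-\bfY_0\|=\|\bfU_1\bfU_1^\top\bfY_0\bfV_1\bfV_1^\top-\bfY_0\|\le 2Bh$, and writing $\bfY_1(t):=\bfU_1\bfS(t)\bfV_1^\top$ this solves $\dt{\bfY_1}=(\bfU_1\bfU_1^\top)\,\bfF(t,\bfY_1)\,(\bfV_1\bfV_1^\top)$, $\bfY_1=\bfY_1(t_1)$. \emph{(c)}~The defect of this Galerkin dynamics against $\dt{\bfB}=\bfF(t,\bfB)$ splits into a Lipschitz term of norm $\le L\|\bfY_1-\bfB\|$ and the compression term $(\bfU_1\bfU_1^\top)\bfF(t,\bfB)(\bfV_1\bfV_1^\top)-\bfF(t,\bfB)$, whose time integral over $[t_0,t_1]$ must be shown to be $O(h(\varepsilon+h))$. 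Here hypothesis~2 (the non-tangential part of $\bfF$ at the rank-$r$ matrices near $\bfA(t)$ is $\varepsilon$-small) and fact~(b) combine: running the argument that underlies the exactness property (Theorem~\ref{thm:new-exact}), which forces this term to vanish when $\varepsilon=0$ and $\bfY_0=\bfA(t_0)$, one sees that in general it is expressible through the $\varepsilon$-small non-tangential component of $\bfF$ and the $O(h)$ quantities of~(b) -- the potentially large objects $\bfU_1-\bfU_0$, $\bfV_1-\bfV_0$, $\bfU_1\bfU_1^\top-\bfU_0\bfU_0^\top$ occurring only multiplied by the $O(h)$ range-capture terms or against the $\varepsilon$-small part, so that $\|\bfS_0^{-1}\|$ cancels. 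A Gr\"onwall estimate on $[t_0,t_1]$, comparing $\bfY_1(\cdot)$ with $(\bfU_1\bfU_1^\top)\bfB(\cdot)(\bfV_1\bfV_1^\top)$, then yields $d\le c\,h(\varepsilon+h)$.

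Much of this is literally inherited from \cite{KieriLubichWalach}: the $\bfK$-step of the new integrator coincides with that of the projector-splitting integrator, and the $\bfL$-step is obtained from it by transposition (with $\bfF$ replaced by $\bfG(t,\bfY)=\bfF(t,\bfY^\top)^\top$ and transposed data), so the substep estimates for $\bfK$ and $\bfL$ carry over verbatim. The genuinely new point is the $\bfS$-step: one must check that replacing the backward substep of \cite{LubichOseledets} by the forward Galerkin step in the updated space $\bfU_1\bfS\bfV_1^\top$ with starting value $\bfM\bfS_0\bfN^\top=\bfU_1^\top\bfY_0\bfV_1$ does not spoil the local error bound -- which is precisely what fact~(b) provides.

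The main obstacle is item~(c): establishing the $O(h(\varepsilon+h))$ local error bound with a constant independent of the singular values of $\bfS_0$. The naive Lipschitz chain would pick up a factor $\|\bfS_0^{-1}\|$ through the sensitivity of the QR factors $\bfU_1,\bfV_1$ (and of the associated orthogonal projections) to perturbations of $\bfK(t_1),\bfL(t_1)$; the content of the argument -- as for the projector-splitting integrator in \cite{KieriLubichWalach} -- is that these large quantities are always paired with the $O(h)$ range-capture terms of~(b) or with the $\varepsilon$-small non-tangential part of $\bfF$, so that $\|\bfS_0^{-1}\|$ drops out. Making this cancellation explicit, and checking that the mechanism behind Theorem~\ref{thm:new-exact} indeed reduces the $\varepsilon=0$ case to zero local error, is where the work lies.
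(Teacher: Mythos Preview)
Your local-to-global framework is correct and matches the paper, and so is the observation that the $\bfK$- and $\bfL$-steps coincide with (resp.\ are the transpose of) the projector-splitting substeps. Fact~(b) is also correct: $\|(\bfI-\bfU_1\bfU_1^\top)\bfY_0\|\le Bh$. But the local-error argument sketched in~(c) has a genuine gap.

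Whether you compare $\bfY_1(\cdot)$ with $\bfB(\cdot)$ or with $\bfU_1\bfU_1^\top\bfB(\cdot)\bfV_1\bfV_1^\top$, you eventually need
\[
\bigl\|\,\bfB(t_1)-\bfU_1\bfU_1^\top\,\bfB(t_1)\,\bfV_1\bfV_1^\top\bigr\|\;=\;O\bigl(h(\varepsilon+h)\bigr),
\]
not merely $O(h)$. Your fact~(b), combined with $\|\bfB(t_1)-\bfY_0\|\le Bh$, gives only $\|(\bfI-\bfU_1\bfU_1^\top)\bfB(t_1)\|\le 2Bh$. Feeding that into the final triangle inequality yields local error $O(h)$ and hence global error $O(1)$, not $c_1\varepsilon+c_2h$. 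The appeal to ``the argument that underlies the exactness property'' does not supply the missing order: exactness uses that $\bfA(t)$ is rank~$r$ for \emph{all}~$t$, while here $\bfB(t)$ is rank~$r$ only at $t_0$, and the perturbation by $\varepsilon$ has to be tracked through the full $K$-step analysis, which you have not done.

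The paper closes this gap by a trick you stop just short of: since the $\bfK$-step is \emph{identical} to that of the projector-splitting integrator, the two methods share $\bfU_1$. The \emph{full} local error bound of the projector-splitting integrator from \cite{KieriLubichWalach} therefore reads $\|\bfU_1\bfZ-\bfA_1\|\le\vartheta$ for some $r\times n$ matrix $\bfZ$ (namely $\bfS_1^{\mathrm{ps}}\bfV_1^{\mathrm{ps},\top}$), and a Pythagorean split
\[
\vartheta^2\ \ge\ \|\bfU_1\bfZ-\bfA_1\|^2
=\|\bfU_1\bfU_1^\top(\bfU_1\bfZ-\bfA_1)\|^2+\|(\bfI-\bfU_1\bfU_1^\top)\bfA_1\|^2
\]
immediately gives $\|(\bfI-\bfU_1\bfU_1^\top)\bfA_1\|\le\vartheta=O(h(\varepsilon+h))$ (Lemma~\ref{lem:Klem-errbound}); the $\bfV_1$-bound follows by transposition. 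With this in hand, your Gr\"onwall comparison of $\bfS(\cdot)$ against $\widetilde\bfS(\cdot)=\bfU_1^\top\bfA(\cdot)\bfV_1$ goes through exactly as you outlined (Lemma~\ref{lem:loc-err}). So the missing idea is not a new estimate for the $\bfS$-step, but importing the projector-splitting local error bound wholesale---not just ``substep estimates''---to control how well the updated bases capture $\bfA_1$.
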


	As in \cite[Section 2.6.3]{KieriLubichWalach}, it can be further shown that an inexact solution of the matrix differential equations in the projector-splitting integrator
	leads to an additional error that is bounded in terms of the local errors in the inexact substeps, again with constants that do not depend on small singular values.

\subsection{Proof of Theorem~\ref{thm:new-exact}}

For the proof of Theorem \ref{thm:new-exact} we need the following auxiliary result, which extends an analogous result in \cite{CeL20} for symmetric matrices.

	\begin{lemma} \label{lem:Klem-exact}
		Let $\bfA(t) \in \mathbb{R}^{m \times n}$ be of rank~$r$  for $t_0 \leq t \leq t_1$, so that $\bfA(t)$ has a factorization \eqref{USV}, $\bfA(t)=\bfU(t)\bfS(t)\bfV(t)^\top$. Moreover, assume  that the $r\times r$ matrix $ \bfV(t_1)^\top \bfV(t_0)$ is invertible. Then,
		$$ \bfU_1 \bfU_1^\top \bfA(t_1) = \bfA(t_1).$$
	\end{lemma}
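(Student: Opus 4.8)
The plan is to compute $\bfK(t)$ in closed form. Since $\bfA(t)$ is given explicitly, we have $\bfF(t,\bfY)=\dt\bfA(t)$ irrespective of the second argument $\bfY$, so the differential equation in the K-step of the new integrator degenerates to $\dt\bfK(t)=\dt\bfA(t)\,\bfV_0$, whose right-hand side no longer depends on $\bfK$. First I would integrate this from $t_0$ to $t$ and use $\bfK(t_0)=\bfU_0\bfS_0=\bfA(t_0)\bfV_0$ — which holds because in the setting of the exactness theorem $\bfY_0=\bfA(t_0)=\bfU_0\bfS_0\bfV_0^\top$ and $\bfV_0^\top\bfV_0=\bfI$ — to obtain $\bfK(t)=\bfA(t)\,\bfV_0$, and in particular $\bfK(t_1)=\bfA(t_1)\,\bfV_0$.

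Next I would insert the factorization $\bfA(t_1)=\bfU(t_1)\bfS(t_1)\bfV(t_1)^\top$ to write $\bfK(t_1)=\bfU(t_1)\bigl(\bfS(t_1)\,\bfV(t_1)^\top\bfV_0\bigr)$, so that $\bfK(t_1)$ equals $\bfU(t_1)$ times the $r\times r$ matrix $\bfS(t_1)\,\bfV(t_1)^\top\bfV_0$. The goal is to show that this $r\times r$ matrix is invertible; then $\mathrm{range}(\bfK(t_1))=\mathrm{range}(\bfU(t_1))=\mathrm{range}(\bfA(t_1))$. Since $\bfS(t_1)$ is invertible by assumption, it remains to handle the factor $\bfV(t_1)^\top\bfV_0$.

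The one step that needs a little care — the main, if minor, obstacle — is to pass from the hypothesis that $\bfV(t_1)^\top\bfV(t_0)$ is invertible to invertibility of $\bfV(t_1)^\top\bfV_0$. Here I would use that $\bfV_0$ and $\bfV(t_0)$ both have orthonormal columns spanning the same subspace, namely the row space of $\bfA(t_0)=\bfY_0$; hence $\bfV(t_0)\bfV(t_0)^\top$ acts as the identity on $\mathrm{range}(\bfV_0)$, so that $\bfV_0=\bfV(t_0)\bigl(\bfV(t_0)^\top\bfV_0\bigr)$ with the $r\times r$ matrix $\bfV(t_0)^\top\bfV_0$ orthogonal (indeed $(\bfV(t_0)^\top\bfV_0)^\top(\bfV(t_0)^\top\bfV_0)=\bfV_0^\top\bfV_0=\bfI$). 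Therefore $\bfV(t_1)^\top\bfV_0=\bigl(\bfV(t_1)^\top\bfV(t_0)\bigr)\bigl(\bfV(t_0)^\top\bfV_0\bigr)$ is a product of two invertible matrices, hence invertible.

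Finally, from the QR factorization $\bfK(t_1)=\bfU_1\bfR_1$ with $\bfK(t_1)$ of full column rank $r$, it follows that $\bfR_1$ is invertible and $\mathrm{range}(\bfU_1)=\mathrm{range}(\bfK(t_1))=\mathrm{range}(\bfA(t_1))$. Since $\bfU_1$ has orthonormal columns, $\bfU_1\bfU_1^\top$ is the orthogonal projector onto $\mathrm{range}(\bfA(t_1))$, and hence $\bfU_1\bfU_1^\top\bfA(t_1)=\bfA(t_1)$, which is the assertion. Note that only the invertibility of $\bfV(t_1)^\top\bfV(t_0)$ (not of $\bfU(t_1)^\top\bfU(t_0)$) enters this argument, consistent with the hypotheses of the lemma.
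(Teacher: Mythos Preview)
Your proof is correct and follows essentially the same route as the paper: compute $\bfK(t_1)=\bfA(t_1)\bfV_0$, factor through $\bfU(t_1)$ times an invertible $r\times r$ matrix, and conclude that $\bfU_1\bfU_1^\top$ projects onto $\mathrm{range}(\bfA(t_1))$. The only difference is that the paper tacitly identifies $\bfV_0$ with $\bfV(t_0)$, whereas you carefully argue that $\bfV(t_1)^\top\bfV_0=(\bfV(t_1)^\top\bfV(t_0))(\bfV(t_0)^\top\bfV_0)$ with the second factor orthogonal; this is a welcome bit of extra rigor but not a different idea.
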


	\begin{proof}
		The solution  of the K-step at time $t_1$ is 
		\begin{equation*}
			\textbf{K}(t_1) = \bfA(t_0) \bfV_0 + \big( \bfA(t_1) - \bfA(t_0) \big) \bfV_0 = \bfA(t_1) \bfV_0 .
		\end{equation*}
		Hence,
		$$ \textbf{K}(t_1) = \bfU(t_1) \big[ \bfS(t_1)  (\bfV(t_1)^\top \bfV(t_0)) \big] \ . $$
		By assumption, the factor in big square brackets is invertible. Computing a QR-decomposition of this term, we have
		$$ \textbf{K}(t_1) = \bfU(t_1) \textbf{Q} \textbf{R} \ ,$$
		where $\textbf{Q} \in \R^{r \times r}$ is an orthogonal matrix and $\textbf{R} \in \R^{r \times r}$ is invertible and upper triangular.
		The QR-factorization of $\textbf{K}(t_1)$ thus yields
		$$\bfU_1 = \bfU(t_1) \textbf{Q} \in \R^{m \times r} .$$	
		To conclude,
		$$ \bfU_1 \bfU_1^\top \bfA(t_1) = \bfU(t_1) \textbf{Q} \textbf{Q}^\top \bfU(t_1)^\top \bfA(t_1) = \bfU(t_1) \bfU(t_1)^\top \bfA(t_1) = \bfA(t_1) ,$$
		which is the stated result.
	\qed \end{proof}

	\begin{proof} (of Theorem~\ref{thm:new-exact})
              Since the L-step is the K-step for the transposed matrix $\bfA(t)^\top$, which has the same rank as $\bfA(t)$,
		it follows from Lemma~\ref{lem:Klem-exact} that
		\begin{equation} \label{UUA}
		\bfU_1 \bfU_1^\top \bfA(t_1) = \bfA(t_1), \qquad
		\bfV_1 \bfV_1^\top \bfA(t_1)^\top = \bfA(t_1)^\top. 
                 \end{equation}		
             		The integrator yields in the S-step 
		$$ \bfS_1 = \bfU_1^\top \bfY_0 {\bfV}_1 + \bfU_1^\top (\bfA(t_1) - \bfA(t_0)) {\bfV}_1 =
		\bfU_1^\top \bfA(t_1)   {\bfV}_1, $$
		since $\bfY_0=\bfA(t_0)$.
		The result  after a time step  of the new integrator is
		$$ \bfY_1 =  \bfU_1 \bfS_1 \bfV_1^\top=\bfU_1 \bfU_1^\top \bfA(t_1)  (\bfV_1 \bfV_1^\top) = \bfA(t_1) ,$$
		where the last equality holds because of \eqref{UUA}. 
	\qed \end{proof}

\subsection{Proof of Theorem~\ref{thm:new-robust}}

Under the assumptions of Theorem~\ref{thm:proj-split-robust}, we introduce the quantity   
	\begin{equation}\label{vartheta}
	  \vartheta :=  (4e^{Lh} BL  + 9BL)h^2 + (3e^{Lh}+4)\varepsilon h +e^{Lh} \delta \, ,
	\end{equation}
	which is the local error bound of the projector-splitting integrator after one time step, as proved in  \cite[Theorem 2.1]{KieriLubichWalach}.

	\begin{lemma} \label{lem:Klem-errbound}
 		Let $\bfA_1$ be the solution at time $t_1=t_0+h$ of the full problem (\ref{eq:fullEq-mat}) with initial condition $\bfA_0$. Assume  that conditions $1$.-$\,3$. of Theorem~$\ref{thm:proj-split-robust}$ are fulfilled.  Then,
				$$ \| \bfU_1 \bfU_1^\top \bfA_1 - \bfA_1 \| \leq \vartheta.$$
	\end{lemma}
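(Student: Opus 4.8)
The plan is to reduce the estimate to the already-known local error bound of the projector-splitting integrator, exploiting the fact (noted in the text above) that the matrix $\bfU_1$ produced by the K-step of the new integrator is \emph{literally the same} as the matrix $\bfU_1$ produced by the K-step of the projector-splitting integrator: both solve the identical $m\times r$ differential equation $\dot\bfK(t)=\bfF(t,\bfK(t)\bfV_0^\top)\bfV_0$ with the identical initial value $\bfU_0\bfS_0$, and then take the orthonormal factor of the QR decomposition of the same matrix $\bfK(t_1)$. In particular, the range of $\bfU_1$ is exactly the range of the rank-$r$ matrix returned by one step of the projector-splitting integrator from the initial value $\bfY_0$.

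Concretely, I would denote by $\bfY_1^{\mathrm{PS}}=\bfU_1\bfS_1^{\mathrm{PS}}(\bfV_1^{\mathrm{PS}})^\top$ the result of one step of the projector-splitting integrator started from the same rank-$r$ matrix $\bfY_0$ (with $\|\bfY_0-\bfA_0\|\le\delta$). The first key step is the purely algebraic observation that every column of $\bfY_1^{\mathrm{PS}}$ lies in the range of $\bfU_1$, so the orthogonal projection $\bfU_1\bfU_1^\top$ fixes it, $\bfU_1\bfU_1^\top\bfY_1^{\mathrm{PS}}=\bfY_1^{\mathrm{PS}}$, whence
\begin{equation*}
 \bfU_1\bfU_1^\top\bfA_1-\bfA_1 = \bfU_1\bfU_1^\top(\bfA_1-\bfY_1^{\mathrm{PS}})-(\bfA_1-\bfY_1^{\mathrm{PS}}) = -(\bfI-\bfU_1\bfU_1^\top)(\bfA_1-\bfY_1^{\mathrm{PS}}).
\end{equation*}
The second key step is the trivial norm estimate: $\bfI-\bfU_1\bfU_1^\top$ is an orthogonal projection, hence has spectral norm at most $1$, and by submultiplicativity of the Frobenius norm under left multiplication by a matrix of bounded spectral norm we obtain $\|\bfU_1\bfU_1^\top\bfA_1-\bfA_1\|\le\|\bfA_1-\bfY_1^{\mathrm{PS}}\|$. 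The final step is to invoke the one-step error bound of the projector-splitting integrator, i.e.\ \cite[Theorem 2.1]{KieriLubichWalach}, which under conditions 1.--3.\ of Theorem~\ref{thm:proj-split-robust} yields precisely $\|\bfA_1-\bfY_1^{\mathrm{PS}}\|\le\vartheta$ with $\vartheta$ as in \eqref{vartheta}. Chaining the three steps gives the claim.

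I do not expect a genuine obstacle: the whole argument is the identification of $\bfU_1$ across the two integrators together with a one-line projection estimate. The only point requiring a little care is to make sure the quoted bound $\vartheta$ is read correctly as an estimate for the error of the projector-splitting integrator after a \emph{single} step starting from the \emph{perturbed} initial value $\bfY_0$ (which is why it carries the $e^{Lh}\delta$ term), rather than a local error under exact initial data; this is exactly the statement proved in \cite{KieriLubichWalach}, so it may be used verbatim.
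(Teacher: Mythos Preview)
Your proposal is correct and is essentially the same argument as the paper's proof. The paper also invokes the one-step error bound $\|\bfU_1\bfZ-\bfA_1\|\le\vartheta$ for the projector-splitting integrator (writing $\bfZ=\bfS_1^{\mathrm{ps}}\bfV_1^{\mathrm{ps},\top}$, i.e.\ your $\bfY_1^{\mathrm{PS}}=\bfU_1\bfZ$) and then extracts $\|(\bfI-\bfU_1\bfU_1^\top)\bfA_1\|\le\vartheta$ via the Pythagorean split $\|\bfU_1\bfZ-\bfA_1\|^2=\|\bfU_1\bfU_1^\top(\bfU_1\bfZ-\bfA_1)\|^2+\|(\bfI-\bfU_1\bfU_1^\top)\bfA_1\|^2$; your use of $\|\bfI-\bfU_1\bfU_1^\top\|_2\le 1$ is just a different phrasing of the same projection estimate.
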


	\begin{proof} The result is proved in the course of the proof of Lemma~1 in \cite{CeL20}. We give the proof here for the convenience of the reader.
		The local error analysis in \cite{KieriLubichWalach} shows that the $r\times n$ matrix 
		$\bfZ=\bfS_1^\mathrm{ps}\bfV_1^{\mathrm{ps},\top}$, where $\bfS_1^\mathrm{ps}$ and $\bfV_1^\mathrm{ps}$ are the matrices computed in the third substep of the projector-splitting algorithm, satisfies
		\begin{equation*}
		\| \bfU_1 \textbf{Z} - \bfA_1 \| \leq \vartheta .
		\end{equation*}
		The square of the left-hand side can be split into two terms:
		\begin{equation*}
		\begin{aligned}
		\| \bfU_1 \textbf{Z} - \bfA_1 \|^2  
		&= \| \bfU_1 \textbf{Z} - \bfU_1 \bfU_1^\top \bfA_1 + \bfU_1 \bfU_1^\top \bfA_1 - \bfA_1 \| ^2 \\
		&= \| \bfU_1 \bfU_1^\top (\bfU_1 \textbf{Z} - \bfA_1) + (\textbf{I} - \bfU_1 \bfU_1^\top) \bfA_1 \| ^2 \\
		&= \| \bfU_1 \bfU_1^\top (\bfU_1 \textbf{Z} - \bfA_1) \| ^2 +  \|(\textbf{I} - \bfU_1 \bfU_1^\top) \bfA_1 \| ^2 .\\
		\end{aligned}
		\end{equation*} 
		Hence, 
		$$ \| \bfU_1 \bfU_1^\top(\bfU_1 \textbf{Z} - \bfA_1) \| ^2 +  \|(\textbf{I} - \bfU_1 \bfU_1^\top) \bfA_1 \| ^2 \leq \vartheta^2 . $$
		This yields the stated result for the second term.
	\qed \end{proof}


	\begin{lemma}\label{lem:gen-aux}
		Let $\bfA_1$, $\bfU_1$ and $\bfV_1$ be defined as above. The following estimate holds:
		$$ \| \bfU_1 \bfU_1^\top \bfA_1 \bfV_1 \bfV_1^\top - \bfA_1 \| \leq 2\vartheta. $$
	\end{lemma}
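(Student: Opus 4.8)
The plan is to reduce the two-sided projection estimate to two applications of the one-sided estimate from Lemma~\ref{lem:Klem-errbound}, exploiting the symmetry of the construction under transposition. Since the L-step of the new integrator is literally the K-step applied to the transposed problem $\bfA(t)^\top$ (which satisfies the same hypotheses with the same constants $L$, $B$, and $\delta$), Lemma~\ref{lem:Klem-errbound} applied to the transposed problem gives $\| \bfV_1 \bfV_1^\top \bfA_1^\top - \bfA_1^\top \| \leq \vartheta$, equivalently $\| \bfA_1 \bfV_1 \bfV_1^\top - \bfA_1 \| \leq \vartheta$, alongside the original $\| \bfU_1 \bfU_1^\top \bfA_1 - \bfA_1 \| \leq \vartheta$.

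The main step is then a telescoping argument with the triangle inequality. I would write
\begin{equation*}
\bfU_1 \bfU_1^\top \bfA_1 \bfV_1 \bfV_1^\top - \bfA_1 = \bfU_1 \bfU_1^\top \bigl( \bfA_1 \bfV_1 \bfV_1^\top - \bfA_1 \bigr) + \bigl( \bfU_1 \bfU_1^\top \bfA_1 - \bfA_1 \bigr),
\end{equation*}
take Frobenius norms, and use that $\bfU_1 \bfU_1^\top$ is an orthogonal projection, hence has norm $\le 1$, so it does not increase the Frobenius norm of $\bfA_1 \bfV_1 \bfV_1^\top - \bfA_1$. The first term is then bounded by $\vartheta$ via the transposed estimate and the second by $\vartheta$ via the original estimate, giving the claimed $2\vartheta$. (One could equally well split off $\bfV_1 \bfV_1^\top$ on the right first; the bound is the same.)

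There is essentially no obstacle here — the only thing to be careful about is making explicit that the transposed differential equation inherits the Lipschitz constant, bound, and initial-error bound, so that the value of $\vartheta$ in \eqref{vartheta} is unchanged when Lemma~\ref{lem:Klem-errbound} is invoked for $\bfA_1^\top$, $\bfV_1$ in place of $\bfA_1$, $\bfU_1$. This is immediate from the definition $\bfG(t,\bfY) = \bfF(t,\bfY^\top)^\top$ and the fact that the Frobenius norm is invariant under transposition. Everything else is a two-line estimate.
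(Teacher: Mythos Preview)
Your proposal is correct and follows essentially the same route as the paper: apply Lemma~\ref{lem:Klem-errbound} once directly and once to the transposed problem (since the L-step is the K-step for $\bfG(t,\bfY)=\bfF(t,\bfY^\top)^\top$), then telescope and use that the orthogonal projection has operator norm~$1$; the paper factors out $\bfV_1\bfV_1^\top$ on the right whereas you factor out $\bfU_1\bfU_1^\top$ on the left, but as you note this is immaterial. The one point you gloss over is Condition~2 (the $\varepsilon$-smallness of the non-tangential part) for $\bfG$, which the paper verifies via the identity $\P(\bfY)\bfZ^\top = [\P(\bfY^\top)\bfZ]^\top$ --- this is needed since $\vartheta$ depends on $\varepsilon$, not just on $L$, $B$, $\delta$.
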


	\begin{proof} The L-step is the K-step for the transposed function $\bfG(t, \bfY)=\bfF(t,\bfY^\top)^\top$, which again
		fulfills conditions $1$.-$\,3$. of Theorem~$\ref{thm:proj-split-robust}$. 
		Conditions $1$. and $3$. hold because of the invariance of the Frobenius norm under transposition. 
		Condition~$2$. holds because
		\begin{equation*}
				\| (\bfI - \P(\bfY)) \bfG(t, \bfY) \|  
				= \| (\bfI - \P(\bfY^\top)) \bfF(t, \bfY^\top) \| 
				\leq \varepsilon,
		\end{equation*}
		where we used the identity $ \P(\bfY)\bfZ^\top = \big[ \P(\bfY^\top)\bfZ \big]^\top$. 
		From Lemma~\ref{lem:Klem-errbound} we thus have
		\begin{equation}\label{eq:U1V1-bound}
		\begin{aligned}
		&\| \bfU_1 \bfU_1^\top \bfA_1 - \bfA_1 \| \leq \vartheta,
		\\
		&\| \bfV_1 \bfV_1^\top \bfA_1^\top - \bfA_1^\top \| \leq \vartheta.
		\end{aligned}
		\end{equation}
		This implies that
		\begin{align*}
		\| \bfU_1 \bfU_1^\top \bfA_1 \bfV_1 \bfV_1^\top- \bfA_1 \| 
		&\leq 
		\| \bfU_1 \bfU_1^\top \bfA_1 \bfV_1 \bfV_1^\top -\bfA_1 \bfV_1 \bfV_1^\top + \bfA_1 \bfV_1 \bfV_1^\top - \bfA_1 \| 
		\\ &\leq
		\| \bfU_1 \bfU_1^\top \bfA_1 \bfV_1 \bfV_1^\top -\bfA_1 \bfV_1 \bfV_1^\top \| + \| \bfA_1 \bfV_1 \bfV_1^\top - \bfA_1 \| 
		\\ &\leq
		\| \big( \bfU_1 \bfU_1^\top \bfA_1 -\bfA_1 \big) \bfV_1 \bfV_1^\top \| + \| \bfV_1 \bfV_1^\top \bfA_1^\top  - \bfA_1^\top \|
		\\ &\leq
		\|  \bfU_1 \bfU_1^\top \bfA_1 -\bfA_1 \| \cdot \| \bfV_1 \bfV_1^\top \|_2 + \| \bfV_1 \bfV_1^\top \bfA_1^\top  - \bfA_1^\top \|.
		\end{align*}
		Since $\| \bfV_1 \bfV_1^\top \|_2=1$, the result follows from (\ref{eq:U1V1-bound}).
	\qed \end{proof}
	
	In the following lemma, we show that the approximation given after one time step is $O(h(h + \varepsilon))$ close to the solution of system (\ref{eq:fullEq-mat}) when the starting values coincide.
	
	\begin{lemma}[Local Error] \label{lem:loc-err} If $\bfA_0=\bfY_0$, 
		the following local error bound holds:
		$$ \| \bfY_1 - \bfA_1 \| \leq h(\hat c_1 \varepsilon  + \hat c_2 h) , $$
		where the constants only depend on $L$ and $B$ and a bound of the step size. In particular, the constants are independent of singular values of the exact or approximate solution. 
	\end{lemma}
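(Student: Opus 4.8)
The plan is to compare the output $\bfY_1 = \bfU_1\bfS_1\bfV_1^\top$ of the new integrator with $\bfA_1$, the exact solution at $t_1$ starting from $\bfA_0=\bfY_0$. The starting point is the identity for the S-step that was already extracted in the proof of Theorem~\ref{thm:new-exact}: since $\bfS(t_0)=\bfM\bfS_0\bfN^\top = \bfU_1^\top\bfY_0\bfV_1$ and $\bfS(t)$ solves $\dot\bfS = \bfU_1^\top\bfF(t,\bfU_1\bfS\bfV_1^\top)\bfV_1$, we get
$$
\bfS_1 = \bfU_1^\top\bfY_0\bfV_1 + \int_{t_0}^{t_1}\bfU_1^\top\bfF\bigl(t,\bfU_1\bfS(t)\bfV_1^\top\bigr)\bfV_1\,\d t .
$$
Multiplying by $\bfU_1$ and $\bfV_1^\top$ and inserting $\bfY_0=\bfA_0$ gives $\bfY_1 = \bfU_1\bfU_1^\top\bfA_0\bfV_1\bfV_1^\top + \bfP_1\bigl(\int_{t_0}^{t_1}\bfF(t,\bfU_1\bfS(t)\bfV_1^\top)\,\d t\bigr)$, where I abbreviate $\bfP_1\bfX := \bfU_1\bfU_1^\top\bfX\bfV_1\bfV_1^\top$. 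On the other side, $\bfA_1 = \bfA_0 + \int_{t_0}^{t_1}\bfF(t,\bfA(t))\,\d t$. Subtracting and using $\bfA_0=\bfY_0$, the difference $\bfY_1-\bfA_1$ splits into three pieces: (i) $\bfU_1\bfU_1^\top\bfA_0\bfV_1\bfV_1^\top - \bfA_0$; (ii) $\bfP_1\int_{t_0}^{t_1}\bigl(\bfF(t,\bfU_1\bfS(t)\bfV_1^\top)-\bfF(t,\bfA(t))\bigr)\,\d t$; and (iii) $(\bfP_1 - \bfI)\int_{t_0}^{t_1}\bfF(t,\bfA(t))\,\d t$.

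For term (i), apply Lemma~\ref{lem:gen-aux} with $\bfA_1$ replaced by $\bfA_0$: since $\bfA_0=\bfY_0$, the relevant local bound $\vartheta$ reduces to the case $\delta=0$, giving a bound of the form $2\bigl((4e^{Lh}BL+9BL)h^2 + (3e^{Lh}+4)\varepsilon h\bigr) = O(h(h+\varepsilon))$ — note, however, that Lemma~\ref{lem:gen-aux} as stated is about $\bfA_1$; the clean route is instead to observe $\|\bfA_0-\bfA_1\|\le Bh$ and $\|\bfP_1\bfA_0-\bfP_1\bfA_1\|\le Bh$, so term (i) is within $2Bh$ of the already-bounded $\|\bfP_1\bfA_1-\bfA_1\|\le 2\vartheta$ with $\delta=0$. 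For term (iii), write $\int_{t_0}^{t_1}\bfF(t,\bfA(t))\,\d t = \bfA_1-\bfA_0$ and again compare to $\bfP_1\bfA_1-\bfA_1$ and $\bfP_1\bfA_0-\bfA_0$ via the same $O(Bh)$ and $O(\vartheta|_{\delta=0})$ estimates; so term (iii) is also $O(h(h+\varepsilon))$. For term (ii), use $\|\bfP_1\|_2\le1$ and the Lipschitz bound: the integrand is bounded by $L\|\bfU_1\bfS(t)\bfV_1^\top - \bfA(t)\|$, so I need an a priori estimate on how far the S-substep trajectory $\bfU_1\bfS(t)\bfV_1^\top$ stays from $\bfA(t)$ on $[t_0,t_1]$.

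That a priori estimate is where the real work lies. At $t=t_0$ the gap is $\|\bfU_1\bfU_1^\top\bfA_0\bfV_1\bfV_1^\top - \bfA_0\|$, already controlled as in term (i). For $t>t_0$, I would set $\bfE(t):=\bfU_1\bfS(t)\bfV_1^\top-\bfA(t)$ and differentiate: $\dot\bfE(t) = \bfP_1\bfF(t,\bfU_1\bfS(t)\bfV_1^\top) - \bfF(t,\bfA(t)) = \bfP_1\bigl(\bfF(t,\bfU_1\bfS(t)\bfV_1^\top)-\bfF(t,\bfA(t))\bigr) + (\bfP_1-\bfI)\bfF(t,\bfA(t))$. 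The first term is bounded by $L\|\bfE(t)\|$; the second by $\|(\bfP_1-\bfI)\bfF(t,\bfA(t))\|$, which I control by splitting $\bfP_1-\bfI$ through $\bfI-\bfU_1\bfU_1^\top$ and $\bfI-\bfV_1\bfV_1^\top$ and invoking Lemma~\ref{lem:Klem-errbound} (and its transposed version from \eqref{eq:U1V1-bound}) together with the $\varepsilon$-smallness of the non-tangential part of $\bfF$ along $\bfA(t)$ — the bound here is $O(\vartheta|_{\delta=0} + \varepsilon) = O(\varepsilon + h)$ uniformly in $t\in[t_0,t_1]$. A Gronwall argument then gives $\|\bfE(t)\| \le e^{L(t-t_0)}\bigl(\|\bfE(t_0)\| + (t-t_0)\cdot O(\varepsilon+h)\bigr) = O(h+\varepsilon)$ on the whole interval, so term (ii) is $\le L h \cdot O(h+\varepsilon) = O(h(h+\varepsilon))$. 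Collecting (i)–(iii) and absorbing all $e^{Lh}$ factors into constants depending only on $L$, $B$ and a step-size bound yields $\|\bfY_1-\bfA_1\|\le h(\hat c_1\varepsilon + \hat c_2 h)$ with the asserted singular-value independence. The main obstacle is making the a priori bound on $\bfE(t)$ fully rigorous: one has to be careful that the projector $\bfP_1$ is fixed (built from $\bfU_1,\bfV_1$ which depend only on the K- and L-steps, not on $\bfS(t)$), so that $\|\bfP_1\bfX\|_2\le\|\bfX\|_2$ can be used freely, and that the neighbourhood of $\bfA(t)$ in condition~2 of Theorem~\ref{thm:proj-split-robust} is large enough to contain $\bfU_1\bfS(t)\bfV_1^\top$ for $h$ below the assumed bound — which follows precisely from the $O(h+\varepsilon)$ estimate itself, closing the argument.
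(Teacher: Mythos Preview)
Your decomposition into (i)--(iii) is correct, and with the right intermediate bounds the approach does yield the lemma. However, two of your stated bounds are wrong. The minor one: your ``clean route'' gives $\|\text{(i)}\|,\|\text{(iii)}\|\le 2Bh+2\vartheta=O(h)$ individually, which is too weak; you need to observe that (i)$+$(iii) telescopes exactly to $\bfU_1\bfU_1^\top\bfA_1\bfV_1\bfV_1^\top-\bfA_1$, hence is $\le 2\vartheta$ by Lemma~\ref{lem:gen-aux}. The substantive one: your claim $\|(\mathbf{P}_1-\bfI)\bfF(t,\bfA(t))\|=O(\varepsilon+h)$ in the Gronwall for $\bfE(t)$ is false in general. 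Condition~2 of Theorem~\ref{thm:proj-split-robust} only bounds the \emph{normal} component $(\bfI-\bfU\bfU^\top)\bfF(\bfI-\bfV\bfV^\top)$; the complement of $\mathbf{P}_1$ also contains the tangential blocks $(\bfI-\bfU_1\bfU_1^\top)\bfZ\,\bfV_1\bfV_1^\top$ and $\bfU_1\bfU_1^\top\bfZ(\bfI-\bfV_1\bfV_1^\top)$, each of which can be of size $B$, and Lemma~\ref{lem:Klem-errbound} concerns $\bfA_1$, not $\bfF$. Your argument is nonetheless salvageable with the crude bound $\|(\bfI-\mathbf{P}_1)\bfF\|\le B$: Gronwall then gives $\|\bfE(t)\|\le e^{Lh}\bigl(\|\bfE(t_0)\|+Bh\bigr)=O(h)$ on $[t_0,t_1]$, so term (ii) is $\le Lh\cdot O(h)=O(Lh^2)$, which suffices.

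The paper's proof avoids both pitfalls by comparing at the $r\times r$ level rather than at the $m\times n$ level. Setting $\widetilde\bfS(t):=\bfU_1^\top\bfA(t)\bfV_1$, one has $\widetilde\bfS(t_0)=\bfS(t_0)$ and $\dot{\widetilde\bfS}=\bfU_1^\top\bfF(t,\bfU_1\widetilde\bfS\bfV_1^\top)\bfV_1+\bfU_1^\top\bfD(t)\bfV_1$, where $\bfD(t)=\bfF(t,\bfA(t))-\bfF(t,\bfU_1\widetilde\bfS(t)\bfV_1^\top)$ satisfies $\|\bfD(t)\|\le L\|\bfA(t)-\bfU_1\widetilde\bfS(t)\bfV_1^\top\|\le 2L(Bh+\vartheta)$ via Lemma~\ref{lem:gen-aux} at $t_1$ and $\|\bfA(t)-\bfA_1\|\le Bh$. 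Gronwall on $\bfS-\widetilde\bfS$ gives $\|\bfS_1-\bfU_1^\top\bfA_1\bfV_1\|\le 2e^{Lh}L(Bh+\vartheta)h$, and then $\|\bfY_1-\bfA_1\|\le\|\bfS_1-\bfU_1^\top\bfA_1\bfV_1\|+2\vartheta$. No pointwise estimate of $(\bfI-\mathbf{P}_1)\bfF$ is ever needed, and the splitting into (i) and (iii) never arises because the first step already reduces everything to $\|\bfS_1-\bfU_1^\top\bfA_1\bfV_1\|$.
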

	
	\begin{proof}
		With a few crucial modifications, the proof is similar to that of \cite[Lemma 2]{CeL20}. We report here the full proof for completeness and convenience of the reader. 
				By the identity $\bfY_1=\bfU_1\bfS_1\bfV_1^\top$ and Lemma~\ref{lem:gen-aux} we have that
		\begin{equation*}
		\begin{aligned}
		\| \bfY_1 - \bfA_1 \| 
		&\leq \| \bfY_1 - \bfU_1 \bfU_1^\top \bfA_1 \bfV_1 \bfV_1^\top \| + \| \bfU_1 \bfU_1^\top \bfA_1 \bfV_1 \bfV_1^\top- \bfA_1 \| \\
		&\leq \| \bfU_1( \bfS_1 - \bfU_1^\top \bfA_1 {\bfV}_1) \bfV_1^\top \| + 2\vartheta \\
		&= \| \bfS_1 - \bfU_1^\top \bfA_1  {\bfV}_1 \|  + 2\vartheta.
		\end{aligned}
		\end{equation*}
		The analysis of the local error thus reduces to estimating $\| \bfS_1 - \bfU_1^\top \bfA_1  {\bfV}_1 \|$.
		To this end, we introduce the following quantity: for $t_0\le t \le t_1$,
		$$ \widetilde\bfS(t) := \bfU_1^\top \bfA(t)  {\bfV}_1 . $$
		We write
		\begin{equation*}
		\begin{aligned}
		\bfA(t) 
		&= \bfU_1 \bfU_1^\top \bfA(t) \bfV_1 \bfV_1^\top + \Bigl( \bfA(t) - \bfU_1 \bfU_1^\top \bfA(t) \bfV_1 \bfV_1^\top \Bigr) 
		= \bfU_1 \widetilde\bfS(t) \bfV_1^\top + \textbf{R}(t),
		\end{aligned}
		\end{equation*}
		where $\textbf{R}(t)$ denotes the term in big brackets.
		Lemma~\ref{lem:gen-aux} and  the bound $B$ of $\bfF$ yield,  for $t_0 \le t \le t_1$,
		$$
		\| \bfA(t) - \bfA(t_1) \| \le \int_{t_0}^{t_1} \| \dt \bfA(s) \|\, ds =
		\int_{t_0}^{t_1} \| \bfF(s,\bfA(s)) \| \, ds \le Bh.
		$$
		Hence the remainder term is bounded by
		$$ 
		\| \textbf{R}(t) \| 
		\leq \|\textbf{R}(t) - \textbf{R}(t_1) \| + \| \textbf{R}(t_1) \|
		\leq 2Bh + 2\vartheta. 
		$$
		It follows that $\bfF(t, \bfA(t))$ can be written as
		\begin{equation*}
		\begin{aligned}
		\bfF(t, \bfA(t)) 
		&= \bfF(t, \bfU_1 \widetilde\bfS(t) \bfV_1^\top + \textbf{R}(t) ) \\
		&= \bfF(t, \bfU_1 \widetilde\bfS(t) \bfV_1^\top) + \bfD(t)
		\end{aligned}
		\end{equation*}
		with the defect 
		$$
		\bfD(t) := \bfF(t, \bfU_1 \widetilde\bfS(t) \bfV_1^\top + \textbf{R}(t)) - \bfF(t, \bfU_1 \widetilde\bfS(t) \bfV_1^\top).
		$$
		With the Lipschitz constant $L$ of $\bfF$, the defect is bounded by
		$$
		\|  \bfD(t) \| \le L \| \textbf{R}(t) \| \le 2L (Bh + \vartheta).
		$$
		We compare the two differential equations
		\begin{equation*}
		\begin{aligned}
		&\dot{\widetilde\bfS}(t) = \bfU_1^\top \bfF(t, \bfU_1 \widetilde\bfS(t) \bfV_1^\top)  {\bfV}_1 + \bfU_1^\top \bfD(t)  {\bfV}_1, 
		\qquad
		&\widetilde\bfS(t_0) = \bfU_1^\top \bfY_0  {\bfV}_1,\\
		&\dot{\bfS}(t) = \bfU_1^\top \bfF(t, \bfU_1 \bfS(t) \bfV_1^\top)  {\bfV}_1, 
		\qquad
		&\bfS(t_0) = \bfU_1^\top \bfY_0  {\bfV}_1.
		\end{aligned}
		\end{equation*}
		By construction, the solution of the first differential equation at time $t_1$ is  $ \widetilde \bfS(t_1) = \bfU_1^\top \bfA_1  {\bfV}_1$. The solution of the second differential equation is $\bfS_1$  as given by the S-step of the integrator.
		With the Gronwall inequality we obtain
		$$
		\|  \bfS_1 - \bfU_1^\top \bfA_1  {\bfV}_1 \| 
		\leq \int_{t_0}^{t_1} e^{L(t_1-s)} \, \| \bfD(s) \| \, ds
		\leq e^{Lh} \,2L (Bh + \vartheta) h.
		$$
		The result now follows using the definition of $\vartheta$. 
	\qed \end{proof}
	
	Using the Lipschitz continuity of the function $\bfF$,  we pass from the local to the global errors by the standard argument of Lady Windermere's fan \cite[Section II.3]{HairerNorsettWanner:ODE_BOOK1} and thus conclude the proof of Theorem~\ref{thm:new-robust-ten}.
	\subsection{Symmetric and skew-symmetric low-rank matrices}
	We now assume that the right-hand side function in \eqref{eq:fullEq-mat} is such that one of the following conditions holds,
	
	\begin{equation} \label{F-sym}
		\bfF(t,\bfY^\top)^\top =  \bfF(t, \bfY) \qquad \text{for all }\ \bfY \in \R^{n \times n}
	\end{equation}
	or
	\begin{equation} \label{F-skewsym}
	\bfF(t,\bfY^\top)^\top =  -\bfF(t, -\bfY) \qquad \text{for all }\ \bfY \in \R^{n \times n}.
	\end{equation}
	Under these conditions, solutions to \eqref{eq:fullEq-mat} with symmetric or skew-symmetric initial data remain symmetric or skew-symmetric, respectively, for all times.
	We also have preservation of (skew-)symmetry for the new integrator, which does not hold for the projector-splitting integrator.
	
	\begin{theorem}
		Let $\bfY_0 = \bfU_0 \bfS_0 \bfU_0^\top \in \R^{n \times n}$ be  symmetric or skew-symmetric and assume that the function $\bfF$ satisfies property~$(\ref{F-sym})$ or $(\ref{F-skewsym})$, respectively.
		Then, the approximation $\bfY_1$ obtained after one time step of the new integrator is symmetric or skew-symmetric, respectively.
	\end{theorem}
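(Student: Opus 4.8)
The plan is to carry a single sign $\sigma\in\{+1,-1\}$ through the algorithm, with $\sigma=+1$ in the symmetric case \eqref{F-sym} and $\sigma=-1$ in the skew-symmetric case \eqref{F-skewsym}, so that the hypothesis on $\bfF$ reads uniformly $\bfF(t,\bfY^\top)^\top=\sigma\,\bfF(t,\sigma\bfY)$ for all $\bfY\in\R^{n\times n}$. Writing $\bfY_0=\bfU_0\bfS_0\bfU_0^\top$ with $\bfU_0^\top\bfU_0=\bfI$, the assumption $\bfY_0^\top=\sigma\bfY_0$ is equivalent to $\bfS_0^\top=\sigma\bfS_0$, and the right basis coincides with the left one, $\bfV_0=\bfU_0$. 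I would then show that this sign structure is inherited first by the $\bfK$/$\bfL$-substep and then by the $\bfS$-substep (throughout, the substep differential equations are assumed, as usual, to have unique solutions).

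For the first substep, insert $\bfV_0=\bfU_0$ into the $\bfL$-equation and test the ansatz $\bfL(t)=\sigma\,\bfK(t)$: the initial value matches because $\bfL(t_0)=\bfU_0\bfS_0^\top=\sigma\,\bfU_0\bfS_0=\sigma\,\bfK(t_0)$, and applying $\bfF(t,\bfY^\top)^\top=\sigma\,\bfF(t,\sigma\bfY)$ with $\bfY=\sigma\,\bfK(t)\bfU_0^\top$ shows that $\sigma\,\bfK(t)$ solves $\dot\bfL(t)=\bfF(t,\bfU_0\bfL(t)^\top)^\top\bfU_0$; by uniqueness, $\bfL(t_1)=\sigma\,\bfK(t_1)$. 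With the standard normalisation of the QR decomposition (positive diagonal of the triangular factor), if $\bfK(t_1)=\bfU_1\bfR_1$ then $\sigma\,\bfK(t_1)=(\sigma\,\bfU_1)\bfR_1$ is the QR decomposition of $\bfL(t_1)=\sigma\,\bfK(t_1)$, so $\bfV_1=\sigma\,\bfU_1$, and consequently $\bfN=\bfV_1^\top\bfV_0=\sigma\,\bfU_1^\top\bfU_0=\sigma\,\bfM$.

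For the $\bfS$-substep, substitute $\bfV_1=\sigma\,\bfU_1$: the equation becomes $\dot\bfS(t)=\sigma\,\bfU_1^\top\bfF(t,\sigma\,\bfU_1\bfS(t)\bfU_1^\top)\bfU_1$ with initial value $\bfS(t_0)=\bfM\bfS_0\bfN^\top=\sigma\,\bfM\bfS_0\bfM^\top$, which satisfies $\bfS(t_0)^\top=\sigma\,\bfS(t_0)$ since $\bfS_0^\top=\sigma\bfS_0$. Now both $t\mapsto\bfS(t)^\top$ and $t\mapsto\sigma\,\bfS(t)$ solve the \emph{same} auxiliary initial value problem $\dot\bfZ(t)=\bfU_1^\top\bfF(t,\bfU_1\bfZ(t)\bfU_1^\top)\bfU_1$: for $\sigma\,\bfS$ this is a direct rewriting (the two factors $\sigma$ cancel), and for $\bfS^\top$ one transposes the $\bfS$-equation and applies $\bfF(t,\bfY^\top)^\top=\sigma\,\bfF(t,\sigma\bfY)$ with $\bfY=\sigma\,\bfU_1\bfS(t)^\top\bfU_1^\top$, the two factors $\sigma$ again cancelling. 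Since they agree at $t_0$, uniqueness gives $\bfS_1^\top=\sigma\,\bfS_1$, and therefore $\bfY_1=\bfU_1\bfS_1\bfV_1^\top=\sigma\,\bfU_1\bfS_1\bfU_1^\top$ satisfies $\bfY_1^\top=\sigma\,\bfU_1\bfS_1^\top\bfU_1^\top=\sigma^2\,\bfU_1\bfS_1\bfU_1^\top=\sigma\,\bfY_1$ — i.e.\ $\bfY_1$ is symmetric if $\sigma=+1$ and skew-symmetric if $\sigma=-1$.

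The only point that needs real care — and the step I would double-check — is the sign bookkeeping across the QR step: one genuinely needs $\bfV_1=\sigma\,\bfU_1$ as an identity of matrices, not merely $\bfV_1=\bfU_1\bfC$ for some orthogonal $\bfC$, because both the $\bfS$-step initial value $\bfM\bfS_0\bfN^\top$ and the returned product $\bfU_1\bfS_1\bfV_1^\top$ depend on the concrete factors. With the positive-diagonal convention this is immediate once $\bfK(t_1)$ has full column rank $r$, which holds whenever the step is well-defined; if one prefers to avoid any assumption here, one can instead phrase the $\bfK$/$\bfL$-step conclusion in terms of the orthogonal projectors $\bfU_1\bfU_1^\top=\bfV_1\bfV_1^\top$ together with $\bfM$ and $\bfN$, which is all that enters $\bfY_1$. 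Apart from this, the argument consists only of the two routine ODE-uniqueness steps above.
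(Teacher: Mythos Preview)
Your proof is correct and follows essentially the same route as the paper: show $\bfL(t_1)=\pm\bfK(t_1)$ from the (skew\nobreakdash-)symmetry assumption, read off the relation between $\bfV_1$ and $\bfU_1$ (and hence between $\bfN$ and $\bfM$), and then use ODE uniqueness in the $\bfS$-step to propagate the sign structure to $\bfS_1$ and thus to $\bfY_1$. The only cosmetic difference is that the paper takes the QR convention giving $\bfV_1=\bfU_1$ in the skew case (with $\widetilde\bfR_1=-\bfR_1$), whereas you take the positive-diagonal convention giving $\bfV_1=\sigma\,\bfU_1$; either choice works, and your explicit discussion of this point is a welcome clarification of something the paper leaves implicit.
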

	
	\begin{proof} Let us just consider the skew-symmetric case \eqref{F-skewsym}. (The symmetric case is analogous.)
The L-step is the K-step for the transposed function $\bfG(t, \bfY)=\bfF(t,\bfY^\top)^\top$, and so the skew-symmetry of $\bfS_0$ and property \eqref{F-skewsym} imply that $\bfL(t_1)=-\bfK(t_1)$, which further yields $\bfV_1=\bfU_1$ and $\bfM=\bfN$. These identities show that the initial value and the right-hand side function of the differential equation for $\bfS(t)$ are skew-symmetric, which implies that $\bfS(t)$ and hence $\bfS_1$ are still skew-symmetric. Altogether, the algorithm gives us the skew-symmetric result $\bfY_1=\bfU_1 \bfS_1 \bfU_1^\top$.
	\qed \end{proof}
	
	 Under condition~(\ref{F-sym}) or~(\ref{F-skewsym}), the new integrator coincides with the (skew)-symmetry preserving low-rank matrix integrator of~\cite{CeL20}.

	\section{Recap: the Tucker tensor projector-splitting integrator}
	
     The solution $A(t)\in \R^{n_1\times\dots\times n_d}$ of a tensor differential equation
	\begin{equation} \label{eq:fullEq-ten}
	\dt{A}(t) = F(t, A(t)), 
	\qquad
	A(0) = A_0 
	\end{equation}	
	is approximated by the solution $Y(t)\in \R^{n_1\times\dots\times n_d}$ of multilinear rank $\bfr=(r_1,\dots,r_d)$ of the differential equation projected to the tangent space of the manifold of rank-$\bfr$ tensors at the current approximation (\cite{KochLubich10}, cf.~also \cite{Beck-etal:MCTDH}),
	\begin{equation} \label{eq:projEq-ten}
	\dt{Y}(t) = \P(Y(t)) F(t, Y(t)),
	\qquad
	Y(t_0) = Y_0,
	\end{equation}
	where $Y_0$ is a rank-$\bfr$ approximation to $A_0$. 
	Tensors $Y(t)$ of multilinear rank $\bfr$ are represented in the Tucker form \cite{DeLauthawer:HOSVD}, written here in a notation following \cite{KoldaBader:TensorDec}:
	\begin{align} \label{Tucker}
	&Y(t) = C(t) \bigtimes_{i=1}^d \bfU_i(t) , 
	\\ 
	& i.e.,\quad y_{i_1,\dots,i_d}(t) = \sum_{j_1,\dots,j_d} c_{j_1,\dots,j_d}(t) \,u_{i_1,j_1}(t)\dots u_{i_d,j_d}(t),
	\nonumber
	\end{align}
	where the slim basis matrices $\bfU_i \in \mathbb{R}^{n_i \times r_i}$ have orthonormal columns and
	the smaller core tensor $C(t) \in \mathbb{R}^{r_1 \times \dots \times r_d }$ is of full multilinear rank $\bfr$.

	The orthogonal tangent space projection $\P(Y)$ is given as an alternating sum of $2d-1$ subprojections \cite{Lubich:MCTDH}, and like in the matrix case, a projector-splitting integrator with favourable properties can be formulated and efficiently implemented \cite{Lubich:MCTDH,LubichVandWalach}. 
	The algorithm runs through the modes $i=1,\dots,d$ and solves differential equations for matrices of the dimension of the slim basis matrices and for the core tensor, alternating with orthogonalizations of slim matrices. Like the matrix projector-splitting integrator,
	also the Tucker tensor projector-splitting integrator has the exactness property and a robust error bound independently of small singular values of matricizations of the core tensor  \cite[Theorems 4.1 and 5.1]{LubichVandWalach}.

	\section{A new robust low-rank Tucker tensor integrator}
	The low-rank numerical integrator defined in Section \ref{sec:Par} for the matrix case extends in a natural way to the Tucker tensor format, and this extension still has the exactness property and robust error bounds that are independent of small singular values of matricizations of the core tensor.
	
In comparison with the Tucker integrator of \cite{Lubich:MCTDH} and \cite{LubichVandWalach}, the new Tucker tensor integrator has the following favourable properties:
	\begin{enumerate}
	\item The solution of the differential equations for the $n_i\times r_i$ matrices can be done in parallel for $i=1,\dots,d$, and also the  QR decompositions can be done in parallel.
	\item No differential equations are solved backward in time. No differential equations for $r_i\times r_i$ matrices need to be solved.
	\item The integrator preserves (anti-)symmetry if the differential equation does.
	\end{enumerate}
On the other hand, in contrast to the projector-splitting Tucker integrator there is apparently no efficient way to construct a time-reversible integrator from this new Tucker integrator.

	\subsection{Formulation of the algorithm}
		One time step of integration from time $t_0$ to $t_1=t_0+h$  starting from a  Tucker tensor 
		of multilinear rank $(r_1,\dots,r_d)$ in factorized form, 
		$Y_0 = C_0 \bigtimes_{i=1}^d \bfU_i^0$,
	 computes an updated Tucker tensor 
		of multilinear rank $(r_1,\dots,r_d)$ in factorized form, $Y_1 = C_1 \bigtimes_{i=1}^d \bfU_i^1$, in the following way:
		
		\begin{enumerate}
		\item Update the basis matrices $\bfU_i^0 \to \bfU_i^1$ for $i=1,\dots,d$ in parallel:\\[2mm]
		Perform a QR factorization of the transposed $i$-mode matricization of the core tensor:
		$$ \text{\textbf{Mat}}_i(C_0)^\top = \textbf{Q}_i \bfS_i^{0,\top} .$$
		With 
		$ \bfV_i^{0,\top} = 
		\textbf{Q}_i^{\top} \bigotimes_{j \neq i}^d \bfU_j^{0,\top} \in \R^{r_i \times n_{\neg i}} 
		$
		(which yields 
		$ {\textbf{Mat}}_i(Y_0) = \bfU_i^0 \bfS_i^0 \bfV_i^{0, \top} $)\\[2mm]
		and the  matrix function $\bfF_{i}(t, \cdot) := \text{\textbf{Mat}}_i \circ F(t, \cdot) \circ  \textit{Ten}_i$,
		integrate from $t=t_0$ to $t_1$ the $n_i \times r_i$ matrix differential equation
		$$ 
		\dot{\bfK}_i(t) = \bfF_{i}(t,\bfK_i(t) \bfV_i^{0,\top}) \bfV_i^0,
		\qquad \bfK_i(t_0) = \bfU_i^0 \bfS_i^0.
		$$
		Perform a QR factorization $\textbf{K}_i(t_1) = \bfU_i^1 {\bfR}_i^1$ and compute the $r_i\times r_i$ matrix $\bfM_i= \bfU_i^{1,\top} \bfU_i^0$.

		\item Update the core tensor $C_0\to C_1$:\\[2mm]
		Integrate from $t=t_0$ to $t_1$ the $r_1 \times\dots\times r_d$  tensor differential equation
			\begin{align*}
			&\dot{C}(t) = F \left( t, C(t) \bigtimes_{i=1}^d \bfU_i^1 \right) \bigtimes_{i=1}^d \bfU_i^{1,\top},
			\quad C(t_0)  = C_0 \bigtimes_{i=1}^d \bfM_i 
			\end{align*}
			and set $C_1=C(t_1)$.		
		\end{enumerate}
	
	To continue in time,  we take $Y_1$ as starting value for the next step and perform another step of the integrator. 
	
	We observe that, in contrast to the Tucker integrators of \cite{LubichVandWalach,Lubich:MCTDH},  the factors $\bfU_i \in \mathbb{R}^{n_i \times r_i}$ are updated simultaneously for $i=1,\dots, d$.

	\subsection{Exactness property}
	The following result extends the exactness results of Theorem~\ref{thm:new-exact} and \cite[Theorem 4.1]{LubichVandWalach} to the new Tucker tensor integrator. 

	\begin{theorem}[Exactness property]
		\label{thm:sym-exact-ten}
		Let $A(t) = C(t) \bigtimes_{i=1}^d \bfU_i(t)$ be of multilinear rank $(r_1,\dots,r_d)$ for $t_0 \leq t \leq t_1$. Moreover, assume  that the $r_i\times r_i$ matrix $ \bfU_i(t_1)^\top \bfU_i(t_0)$ is invertible for each $i=1, \dots, d$. With $Y_0=A(t_0)$, the new Tucker integrator  with rank $(r_1,\dots,r_d)$ for
		$\dt Y(t)=\P(Y(t))\dt A(t)$ with starting value $Y_0=A(t_0)$ is then exact: $Y_1 = A(t_1)$.
	\end{theorem}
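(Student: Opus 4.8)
The plan is to transcribe the matrix-case argument of Lemma~\ref{lem:Klem-exact} together with the proof of Theorem~\ref{thm:new-exact}, carried out separately in each mode. I would reduce the assertion to two facts. \emph{Fact~A:} under the hypotheses, for every $i=1,\dots,d$ the updated basis matrix $\bfU_i^1$ obeys $\bfU_i^1\bfU_i^{1,\top}\,\textbf{Mat}_i(A(t_1)) = \textbf{Mat}_i(A(t_1))$, equivalently $\textrm{range}\,\bfU_i^1 = \textrm{range}\,\bfU_i(t_1) = \textrm{range}\,\textbf{Mat}_i(A(t_1))$. \emph{Fact~B:} in the exact setting $F(t,\cdot)\equiv\dt A(t)$ the core substep produces $C_1 = A(t_1)\bigtimes_{i=1}^d\bfU_i^{1,\top}$. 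Granting both, the theorem follows at once: $Y_1 = C_1\bigtimes_{i=1}^d\bfU_i^1 = A(t_1)\bigtimes_{i=1}^d\bigl(\bfU_i^1\bfU_i^{1,\top}\bigr) = A(t_1)$, the last step peeling off the orthogonal projectors $\bfU_i^1\bfU_i^{1,\top}$ one mode at a time, each fixing $A(t_1)$ by Fact~A.

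For Fact~A I would mimic Lemma~\ref{lem:Klem-exact}. Since $\bfF_i(t,\cdot)=\textbf{Mat}_i\circ F(t,\cdot)\circ\textit{Ten}_i$ and $F(t,Y)=\dt A(t)$ does not depend on $Y$, the $i$-th K-step equation is $\dot{\bfK}_i(t)=\textbf{Mat}_i(\dt A(t))\,\bfV_i^0$, whose right-hand side is independent of $\bfK_i$; with $\bfK_i(t_0)=\bfU_i^0\bfS_i^0=\textbf{Mat}_i(Y_0)\,\bfV_i^0=\textbf{Mat}_i(A(t_0))\,\bfV_i^0$ (using $\textbf{Mat}_i(Y_0)=\bfU_i^0\bfS_i^0\bfV_i^{0,\top}$ and $\bfV_i^{0,\top}\bfV_i^0=\bfI$) it integrates to $\bfK_i(t_1)=\textbf{Mat}_i(A(t_1))\,\bfV_i^0$. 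Factoring $\textbf{Mat}_i(A(t_1))=\bfU_i(t_1)\bigl[\textbf{Mat}_i(C(t_1))\bigl(\bigotimes_{j\neq i}\bfU_j(t_1)\bigr)^{\!\top}\bigr]$ and inserting $\bfV_i^0=\bigl(\bigotimes_{j\neq i}\bfU_j^0\bigr)\bfQ_i$ together with the Kronecker mixed-product rule gives
$$
\bfK_i(t_1) = \bfU_i(t_1)\Bigl[\textbf{Mat}_i(C(t_1))\Bigl(\bigotimes_{j\neq i}\bfU_j(t_1)^\top\bfU_j^0\Bigr)\bfQ_i\Bigr].
$$
Once the $r_i\times r_i$ matrix in brackets is seen to be invertible, $\bfK_i(t_1)$ has rank $r_i$, and a thin QR factorization $\bfK_i(t_1)=\bfU_i^1\bfR_i^1$ forces $\bfU_i^1=\bfU_i(t_1)\bfQ$ with $\bfQ\in\R^{r_i\times r_i}$ orthogonal; Fact~A follows.

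For Fact~B, in the exact setting the core equation reads $\dot{C}(t)=\dt A(t)\bigtimes_{i=1}^d\bfU_i^{1,\top}$, again with a right-hand side independent of $C$, and its initial value telescopes: $C(t_0)=C_0\bigtimes_{i=1}^d\bfM_i=C_0\bigtimes_{i=1}^d(\bfU_i^{1,\top}\bfU_i^0)=\bigl(C_0\bigtimes_{i=1}^d\bfU_i^0\bigr)\bigtimes_{i=1}^d\bfU_i^{1,\top}=Y_0\bigtimes_{i=1}^d\bfU_i^{1,\top}=A(t_0)\bigtimes_{i=1}^d\bfU_i^{1,\top}$. Integrating and using linearity of the multilinear product in each argument, $C_1=A(t_0)\bigtimes_{i=1}^d\bfU_i^{1,\top}+\bigl(A(t_1)-A(t_0)\bigr)\bigtimes_{i=1}^d\bfU_i^{1,\top}=A(t_1)\bigtimes_{i=1}^d\bfU_i^{1,\top}$, as required.

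The main obstacle is the invertibility of the bracketed $r_i\times r_i$ matrix $\textbf{Mat}_i(C(t_1))\bigl(\bigotimes_{j\neq i}\bfU_j(t_1)^\top\bfU_j^0\bigr)\bfQ_i$ appearing in Fact~A — equivalently, that $\bfK_i(t_1)$ has full rank $r_i$, so that its QR factorization really spans all of $\textrm{range}\,\textbf{Mat}_i(A(t_1))$ and not a proper subspace. For $d=2$ this is exactly Lemma~\ref{lem:Klem-exact}: there $\bfQ_i$ is square orthogonal and the bracket reduces, up to harmless orthogonal factors, to $\bfS(t_1)\,(\bfV(t_1)^\top\bfV(t_0))$, invertible by hypothesis. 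For $d\ge 3$ one must combine that $\textbf{Mat}_i(C(t_1))$ has full row rank $r_i$ (because the curve $A(t)$ stays of full multilinear rank $\bfr$ throughout $[t_0,t_1]$), that $\bigotimes_{j\neq i}\bfU_j(t_1)^\top\bfU_j^0$ is invertible (each factor being invertible by the hypothesis on the basis overlaps, $\bfU_j^0$ spanning $\textrm{range}\,\bfU_j(t_0)$), and that $\bfQ_i$ has full column rank $r_i$ with column space the row space of $\textbf{Mat}_i(C_0)$; controlling how the $t_1$-core unfolding acts on the $t_0$-core unfolding transported by the other modes' overlaps is the one genuinely non-clerical point, while everything else is routine matricization algebra.
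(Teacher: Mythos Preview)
Your approach is exactly the paper's: apply Lemma~\ref{lem:Klem-exact} mode by mode to obtain $\bfU_i^1\bfU_i^{1,\top}\,\textbf{Mat}_i(A(t_1))=\textbf{Mat}_i(A(t_1))$ (your Fact~A), compute $C_1 = A(t_1)\bigtimes_{i=1}^d\bfU_i^{1,\top}$ from the core substep (your Fact~B), and combine to get $Y_1=A(t_1)$. The paper's proof is precisely this, written more tersely --- it cites Lemma~\ref{lem:Klem-exact} for each $i$ and then performs the same chain of equalities you give for $Y_1$.

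On the invertibility concern you raise for the bracketed $r_i\times r_i$ matrix: the paper does not address it either. It simply invokes Lemma~\ref{lem:Klem-exact} on each matricization $\textbf{Mat}_i(A(t))$, but that lemma's hypothesis is the invertibility of $\bfV(t_1)^\top\bfV(t_0)$, i.e.\ a condition on the \emph{row spaces} of the mode-$i$ unfoldings at the two times. For $d\ge 3$ this is not obviously implied by the theorem's stated hypothesis on the $\bfU_j(t_1)^\top\bfU_j(t_0)$: a full-row-rank $\textbf{Mat}_i(C(t_1))$ times an invertible Kronecker block times a full-column-rank $\bfQ_i$ need not be invertible, exactly as you suspect. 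So you have correctly identified a point the paper glosses over; you are not missing any argument that the paper actually supplies.
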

	
	\begin{proof}
		For each $i=1, \dots, d$, we apply Lemma~\ref{lem:Klem-exact} to $\textbf{Mat}_i(\dt A(t))$
		$$ 
			\textbf{Mat}_i(A(t_1) \times_i \bfU_i^1 \bfU_i^{1,\top}) 
			= \bfU_i^1 \bfU_i^{1,\top} \textbf{Mat}_i(A(t_1)) 
			= \textbf{Mat}_i(A(t_1)) .
		$$  
		We tensorize  in the $i$-th mode and obtain
		$$ A(t_1) \times_i \bfU_i^1 \bfU_i^{1,\top} = A(t_1), \qquad  i = 1, \dots , d \ .$$
		With $Y_0 = A(t_0)$ 
		we obtain from the second substep of the algorithm
		\begin{align*} 
			Y_1 &= C_1 \bigtimes_{i=1}^d \bfU_i^1  
			\\&=
		\Bigl( Y_0 \bigtimes_{i=1}^d  \bfU_i^{1,\top} + (A(t_1)-A(t_0)) \bigtimes_{i=1}^d  \bfU_i^{1,\top} \Bigr) \bigtimes_{i=1}^d \bfU_i^1
		\\
		&= \Bigl( A(t_1)\bigtimes_{i=1}^d  \bfU_i^{1,\top} \Bigr) \bigtimes_{i=1}^d \bfU_i^1 
		\\&= A(t_1) \bigtimes_{i=1}^d \bfU_i^1 \bfU_i^{1,\top} = A(t_1)\, ,
		\end{align*}		
		which proves the exactness. 
	\qed \end{proof}
	
	\subsection{Robust error bound} 
	The robust error bounds from Theorem~\ref{thm:new-robust} and \cite[Theorem~5.1]{LubichVandWalach} extend to the new Tucker tensor integrator as follows. 
	The norm $\|B\|$ of a tensor $B$ used here is the Euclidean norm of the vector of entries of $B$.

	\begin{theorem}[{Robust error bound}]
		\label{thm:new-robust-ten}
		Let $A(t)$ denote the solution of the tensor differential equation \eqref{eq:fullEq-ten}. Assume  the following:
		\begin{enumerate}
			\item 
			$F$ is Lipschitz-continuous and bounded.		
			\item
			The non-tangential part of $F(t, Y)$ is $\varepsilon$-small:
			$$
			\| (I - \P(Y)) F(t, Y) \| \le \eps
			$$
			for all $Y$ of multilinear rank  $(r_1,\dots,r_d)$ in a neighbourhood of $A(t)$ and $0\le t \le T$.
			\item
			The error in the initial value is $\delta$-small:
			$$
			\| Y_0 - A_0 \| \le \delta.
			$$
		\end{enumerate}	
		Let $Y_n$ denote the approximation of multinear rank $(r_1,\dots,r_d)$ to $A(t_n)$ at $t_n=nh$ obtained after n steps of the new Tucker integrator with step-size $h>0$.
		Then, the error satisfies for all $n$ with $t_n =  nh \leq T$
		$$ \| Y_n - A(t_n) \| \leq c_0\delta + c_1 \varepsilon + c_2 h ,$$	
		where the constants $c_i$ only depend on the Lipschitz constant $L$ and bound $B$ of $F$, on~$T$, and on the dimension $d$. In particular, the constants are independent of singular values of matricizations of the exact or approximate solution. 
	\end{theorem}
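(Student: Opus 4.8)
The plan is to mirror the matrix analysis of Section~\ref{sec:Par}, with the two mode-wise orthogonal projections $\bfU_1\bfU_1^\top$ and $\bfV_1\bfV_1^\top$ there replaced by the $d$ mode-wise projections $\times_i \bfU_i^1\bfU_i^{1,\top}$. Throughout I keep $\vartheta$ from \eqref{vartheta}, the one-step local error bound of the matrix projector-splitting integrator; recall that in the local-error step the starting values coincide, so the $\delta$-term in $\vartheta$ drops and $\vartheta=\bigo(h(h+\varepsilon))$.

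\emph{Mode-wise subspace bound.} The first step is to show that, for each $i=1,\dots,d$,
\[
 \| A_1 \times_i \bfU_i^1\bfU_i^{1,\top} - A_1 \| \le \vartheta ,
\]
the tensor analogue of Lemma~\ref{lem:Klem-errbound}. The key observation is that the $i$-mode K-step of the new integrator --- solving $\dot\bfK_i=\bfF_i(t,\bfK_i\bfV_i^{0,\top})\bfV_i^0$ with $\bfK_i(t_0)=\bfU_i^0\bfS_i^0$ and forming $\bfK_i(t_1)=\bfU_i^1\bfR_i^1$ --- is exactly the K-step of the matrix projector-splitting integrator applied to the matricized differential equation $\dot\bfB=\bfF_i(t,\bfB)$ with the factorization $\textbf{Mat}_i(Y_0)=\bfU_i^0\bfS_i^0\bfV_i^{0,\top}$, whose exact solution is $\textbf{Mat}_i(A(t))$. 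Hence $\bfU_i^1$ is the first factor produced by that matrix integrator, and the robust local error bound for the matrix projector-splitting integrator gives a matrix $\bfZ_i$ with $\|\bfU_i^1\bfZ_i-\textbf{Mat}_i(A_1)\|\le\vartheta$; the Pythagoras splitting used in the proof of Lemma~\ref{lem:Klem-errbound} then yields $\|(\bfI-\bfU_i^1\bfU_i^{1,\top})\textbf{Mat}_i(A_1)\|\le\vartheta$, which is the claimed bound after tensorizing. Combining the $d$ mode-wise bounds by a telescoping sum and using that each $\times_i \bfU_i^1\bfU_i^{1,\top}$ is an orthogonal projection of operator norm~$1$ gives, as the analogue of Lemma~\ref{lem:gen-aux},
\[
 \Bigl\| A_1 \bigtimes_{i=1}^d \bfU_i^1\bfU_i^{1,\top} - A_1 \Bigr\|
 \le \sum_{k=1}^d \Bigl\| \bigl( A_1 \times_k \bfU_k^1\bfU_k^{1,\top} - A_1 \bigr) \bigtimes_{i<k} \bfU_i^1\bfU_i^{1,\top} \Bigr\|
 \le d\,\vartheta .
\]

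\emph{Local and global error.} For the local error I would argue as in Lemma~\ref{lem:loc-err}. Since $Y_1=C_1\bigtimes_{i=1}^d\bfU_i^1$ with orthonormal $\bfU_i^1$, the estimate reduces, via the product bound above, to bounding $\|C_1-A_1\bigtimes_{i=1}^d\bfU_i^{1,\top}\|$. Introducing $\widetilde C(t):=A(t)\bigtimes_{i=1}^d\bfU_i^{1,\top}$ and splitting $A(t)=\widetilde C(t)\bigtimes_{i=1}^d\bfU_i^1+R(t)$, the bound $\|A(t)-A(t_1)\|\le Bh$ together with the product bound gives $\|R(t)\|\le 2Bh+d\vartheta$ on $[t_0,t_1]$, hence $F(t,A(t))=F(t,\widetilde C(t)\bigtimes_{i=1}^d\bfU_i^1)+D(t)$ with $\|D(t)\|\le L(2Bh+d\vartheta)$. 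The core-tensor equation $\dot C=F(t,C\bigtimes_{i=1}^d\bfU_i^1)\bigtimes_{i=1}^d\bfU_i^{1,\top}$ of the integrator and the perturbed equation satisfied by $\widetilde C$ share the initial value $\widetilde C(t_0)=A_0\bigtimes_{i=1}^d\bfU_i^{1,\top}=C_0\bigtimes_{i=1}^d\bfM_i=C(t_0)$ because $Y_0=A_0$; a Gronwall estimate (the right-hand side is $L$-Lipschitz in $C$ since the $\bfU_i^1$ have orthonormal columns) then gives $\|C_1-A_1\bigtimes_{i=1}^d\bfU_i^{1,\top}\|\le e^{Lh}L(2Bh+d\vartheta)h$, and with $\vartheta=\bigo(h(h+\varepsilon))$ this yields the local error bound $\|Y_1-A_1\|\le h(\hat c_1\varepsilon+\hat c_2 h)$ with constants depending on $L$, $B$, $d$ and a step-size bound. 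The global bound then follows exactly as in the matrix case from Lipschitz continuity of $F$ and the Lady Windermere's fan argument \cite[Section~II.3]{HairerNorsettWanner:ODE_BOOK1}, with constants additionally depending on $T$ and $d$.

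\emph{Main obstacle.} The delicate point is the mode-wise subspace bound, specifically verifying that the hypotheses of the matrix robust error bound (Theorem~\ref{thm:proj-split-robust}) are inherited by each matricized problem $\dot\bfB=\bfF_i(t,\bfB)$ in a neighbourhood of $\textbf{Mat}_i(A(t))$. Lipschitz continuity and boundedness transfer because matricization and tensorization are isometries, but the $\varepsilon$-smallness of the non-tangential part requires that $\|(I-\P(Y))F(t,Y)\|\le\varepsilon$ imply $\|(\bfI-\P(\bfB))\bfF_i(t,\bfB)\|\le\varepsilon$ for $\bfB=\textbf{Mat}_i(Y)$. This is exactly where one uses the structure of the Tucker tangent-space projection $\P(Y)$ as an alternating sum of $2d-1$ subprojections and the resulting relation between $\textbf{Mat}_i(\P(Y)Z)$ and a matrix tangent-space projection, as in \cite{Lubich:MCTDH,LubichVandWalach}. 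Once this transfer is in place, the remaining steps are a routine adaptation of the matrix proof, the dimension $d$ entering the constants only through the telescoping.
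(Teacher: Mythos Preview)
Your proposal is correct and follows essentially the same route as the paper: the mode-wise subspace bound and its telescoping combination are the paper's Lemma~\ref{lem:aux-ten}, your local-error argument via $\widetilde C(t)$ and Gronwall is precisely what the paper defers to \cite[Section~5.3]{CeL20}, and the passage to the global error is identical. For your ``main obstacle'', the paper's resolution (Lemma~\ref{lem:Tucker:conditionsFi}) is slightly cleaner than invoking the full alternating-sum structure: one simply observes that $\P_i(\bfY_{(i)}):=\textbf{Mat}_i\circ\P(Y)\circ\textit{Ten}_i$ is an orthogonal projection onto a \emph{subspace} of the rank-$r_i$ matrix tangent space at $\bfY_{(i)}$, so the rank-$r_i$ tangent projection $\P_{(i)}$ satisfies $\|(\bfI-\P_{(i)}(\bfY_{(i)}))\bfF_i(t,\bfY_{(i)})\|\le\|(\bfI-\P_i(\bfY_{(i)}))\bfF_i(t,\bfY_{(i)})\|=\|(I-\P(Y))F(t,Y)\|\le\varepsilon$.
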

	
	
	The proof of Theorem \ref{thm:new-robust-ten} proceeds similar to the proof of Theorem~\ref{thm:new-robust} for the matrix case. We begin with two key lemmas and are then in a position to analyse  the local error produced after one time step. We denote the solution value at $t_1$ by $A_1$. The basis matrix computed in the first part of the integrator is denoted by $\bfU_i^1$ for each $i=1,\dots, d$. 	
	\begin{lemma} \label{lem:Tucker:conditionsFi}
		For each $i=1,\dots, d$, the function $\bfF_{i}(t, \cdot) := {\mathbf{Mat}}_i \circ F(t, \cdot) \circ  \mathit{Ten}_i$ fulfills Conditions $1.$ - $2.$ of Theorem~$\ref{thm:proj-split-robust}$, and the initial matrix $\bfY_{(i)}^0=\mathbf{Mat}_i(Y_0)$ fulfills Condition~$3$. of that theorem.
	\end{lemma}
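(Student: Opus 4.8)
The plan is to verify each of the three conditions for the matricized function $\bfF_i$ by reducing them to the corresponding conditions for $F$, using the fact that the matricization operator $\mathbf{Mat}_i$ and its inverse $\mathit{Ten}_i$ are mutually inverse linear isometries between $\R^{n_1\times\dots\times n_d}$ and $\R^{n_i\times n_{\neg i}}$ (with respect to the Euclidean/Frobenius norms). This isometry property is the workhorse: it immediately transfers Lipschitz continuity, boundedness, and norm estimates back and forth.

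First I would establish Condition~1. For matrices $\bfX,\widetilde\bfX\in\R^{n_i\times n_{\neg i}}$, write $\bfF_i(t,\bfX)-\bfF_i(t,\widetilde\bfX)=\mathbf{Mat}_i\bigl(F(t,\mathit{Ten}_i\bfX)-F(t,\mathit{Ten}_i\widetilde\bfX)\bigr)$; since $\mathbf{Mat}_i$ preserves norms, the Frobenius norm of the left side equals the tensor norm of the difference of the $F$-values, which is bounded by $L\|\mathit{Ten}_i\bfX-\mathit{Ten}_i\widetilde\bfX\|=L\|\bfX-\widetilde\bfX\|$, again by the isometry. The bound $\|\bfF_i(t,\bfX)\|=\|F(t,\mathit{Ten}_i\bfX)\|\le B$ follows the same way. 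So $\bfF_i$ is $L$-Lipschitz and $B$-bounded with the same constants. Next, Condition~2: I would use the key identity relating the matrix tangent-space projection $\P$ acting on $\mathbf{Mat}_i(Y)$ to the tensor tangent-space projection. Here one needs that for $Y$ of multilinear rank $(r_1,\dots,r_d)$, the non-tangential part of $F(t,Y)$ in the tensor sense dominates (up to the matricization isometry) the non-tangential part of $\bfF_i(t,\mathbf{Mat}_i(Y))$ in the matrix sense; concretely $\|(\bfI-\P(\mathbf{Mat}_i Y))\bfF_i(t,\mathbf{Mat}_i Y)\|\le\|\mathbf{Mat}_i\bigl((I-\P(Y))F(t,Y)\bigr)\|=\|(I-\P(Y))F(t,Y)\|\le\varepsilon$. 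The inequality here comes from the fact that the range of the matrix tangent-space projection at $\mathbf{Mat}_i(Y)$ contains the matricization of the tensor tangent space at $Y$ (the $i$-mode tangent directions are a subset), so the orthogonal projection onto the \emph{larger} space leaves a \emph{smaller} residual.

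Finally, Condition~3 is essentially immediate: $\bfY_{(i)}^0=\mathbf{Mat}_i(Y_0)$, and by the isometry $\|\bfY_{(i)}^0-\mathbf{Mat}_i(A_0)\|=\|Y_0-A_0\|\le\delta$, where $\mathbf{Mat}_i(A_0)$ plays the role of the reference matrix $\bfA_0$ in the matrix theorem.

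The main obstacle I anticipate is pinning down the projection inequality in Condition~2 rigorously — that is, correctly identifying how the $i$-mode piece of the Tucker tangent space sits inside the matrix tangent space at $\mathbf{Mat}_i(Y)$, and checking that the inclusion goes in the direction that makes the residual decrease rather than increase. The norm transfers in Conditions~1 and~3 are routine once the isometry of $\mathbf{Mat}_i$ is invoked, but the projection comparison requires a careful look at the explicit structure of both tangent spaces (the matrix tangent space at a rank-$r_i$ matrix versus the Tucker tangent space, matricized). I would handle this by writing out the tangent space at $\mathbf{Mat}_i(Y_0)=\bfU_i^0\bfS_i^0\bfV_i^{0,\top}$ explicitly using the parametrization already introduced in the algorithm description, and exhibiting the matricized Tucker tangent directions as a subspace of it.
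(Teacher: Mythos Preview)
Your proposal is correct and follows essentially the same approach as the paper: the isometry of $\mathbf{Mat}_i$ handles Conditions~1 and~3 directly, and for Condition~2 the paper likewise introduces the matricized Tucker tangent projection $\P_i(\bfY_{(i)}):=\mathbf{Mat}_i\circ\P(Y)\circ\mathit{Ten}_i$, observes that its range is a subspace of the rank-$r_i$ matrix tangent space at $\bfY_{(i)}$, and concludes that the residual with respect to the full matrix tangent projection $\P_{(i)}$ is no larger. The subspace inclusion you flag as the main obstacle is exactly the point the paper asserts (without spelling out the tangent-space parametrization), so your plan to verify it explicitly would simply make that step more self-contained.
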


	\begin{proof}
		For each $i=1 \dots d$, it holds that for $\bfY_{(i)}=\mathbf{Mat}_i(Y)$,
		$$ 
			\| \bfF_{i}(t, \bfY_{(i)}) \| = \| F(t, Y) \| .
		$$
		The boundedness and Lipschitz condition of the matrix-valued function $\bfF_{i}$ follows from the boundedness and Lipschitz condition of the tensor-valued function~$F$.
		
		Condition~$2.$ follows with the help of the correspondingly defined projection
		$$
		\P_{i}(\bfY_{(i)}) := \text{\textbf{Mat}}_i \circ \P(Y) \circ  \textit{Ten}_i \quad\ \text{for}\quad \bfY_{(i)}=\mathbf{Mat}_i(Y),
		$$ 
		which is an orthogonal projection onto a subspace of the tangent space at $\bfY_{(i)}$ of the manifold of rank-$r_i$ matrices of dimension $n_i\times n_{\neg i}$. Denoting the orthogonal projection onto this tangent space by $\P_{(i)}(\bfY_{(i)})$, we thus have
		\begin{equation*}
			\| \big(\bfI - \P_{(i)}(\bfY_{(i)}) \big) \bfF_{i}(t, \bfY_{(i)}) \| \le \| \big(\bfI - \P_{i}(\bfY_{(i)}) \big) \bfF_{i}(t, \bfY_{(i)}) \|
			= \| (\bfI - \P(Y )) F(t,Y) \| \leq \varepsilon .
		\end{equation*}
		Condition~$3.$ holds due to the invariance of the Frobenius norm under matricization,
		$$ \| \bfY_{(i)}^0 - \text{\textbf{Mat}}_i(A_0) \| =  \|  \text{\textbf{Mat}}_i(Y_0-A_0)\| =  \| Y_0 - A_0 \| \leq \delta , $$
		and so we obtain the stated result.
	\qed \end{proof}
	
	\begin{lemma} \label{lem:aux-ten}
		The following estimate holds with $\vartheta$ of \eqref{vartheta}:
		$$ \| A_1 \bigtimes_{i=1}^d \bfU_i^1 \bfU_i^{1,\top} - A_1 \| \leq  d\,\vartheta, $$
		where $c$ only depends on $d$ and a bound for $hL$.
	\end{lemma}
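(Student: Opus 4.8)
The plan is to mirror the matrix argument of Lemma~\ref{lem:gen-aux}, replacing the two one-sided projections $\bfU_1\bfU_1^\top$ and $\bfV_1\bfV_1^\top$ there by the $d$ mode projections $P_i := \bfU_i^1\bfU_i^{1,\top}$, and to telescope over the modes so that each of the $d$ terms contributes a single $\vartheta$. (The asserted bound is $d\,\vartheta$; the trailing mention of a constant $c$ in the statement is vestigial.)

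First I would record the one-mode estimates. By Lemma~\ref{lem:Tucker:conditionsFi}, for each $i=1,\dots,d$ the matricized function $\bfF_{i}$ and the matricized initial value $\textbf{Mat}_i(Y_0)$ satisfy Conditions~$1.$--$3.$ of Theorem~\ref{thm:proj-split-robust} with the same $L,B,\varepsilon,\delta$. Since $\bfU_i^1$ is exactly the factor produced by the K-step of the matrix integrator applied to $\bfF_{i}$ with starting value $\textbf{Mat}_i(Y_0)$, Lemma~\ref{lem:Klem-errbound} yields $\| \bfU_i^1\bfU_i^{1,\top}\,\textbf{Mat}_i(A_1) - \textbf{Mat}_i(A_1)\| \le \vartheta$, i.e., after tensorizing in the $i$-th mode,
$$ \| A_1 \times_i P_i - A_1 \| \le \vartheta \qquad (i=1,\dots,d). $$

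Next I would telescope. Writing $A_1 - A_1 \bigtimes_{i=1}^d P_i = \sum_{k=1}^d \big( A_1 \bigtimes_{i=1}^{k-1} P_i - A_1 \bigtimes_{i=1}^{k} P_i \big)$, the $k$-th summand equals $\big( A_1 \times_k (I - P_k) \big) \bigtimes_{i=1}^{k-1} P_i$, using that mode multiplications in distinct modes commute. Multiplication in a mode by an orthogonal projection does not increase the Frobenius norm (it acts as $Q \mapsto P_i\,\textbf{Mat}_i(Q)$ with $\|P_i\|_2 = 1$), so the $k$-th summand is bounded in norm by $\| A_1 \times_k (I - P_k)\| \le \vartheta$ from the one-mode estimate. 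Summing the $d$ terms gives $\| A_1 \bigtimes_{i=1}^d P_i - A_1 \| \le d\,\vartheta$, as claimed.

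I do not expect a genuine obstacle here. The only points needing a little care are that mode products in different modes commute — so the already-applied projections can be pulled outside without disturbing the $k$-th factor — and that each such projection is non-expansive in the Frobenius norm; this is what keeps the constant linear in $d$ rather than exponential, exactly as in the passage from two to $d$ factors relative to Lemma~\ref{lem:gen-aux}.
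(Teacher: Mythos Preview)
Your proof is correct and follows essentially the same approach as the paper: obtain the one-mode bounds $\|A_1\times_i \bfU_i^1\bfU_i^{1,\top}-A_1\|\le\vartheta$ from Lemmas~\ref{lem:Tucker:conditionsFi} and~\ref{lem:Klem-errbound}, then telescope over the modes using that the orthogonal projections $\bfU_i^1\bfU_i^{1,\top}$ are non-expansive in the Frobenius norm. The only cosmetic difference is that the paper peels off one mode at a time and iterates, whereas you write the full telescoping sum directly.
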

	
	\begin{proof}
		
		From Lemma~\ref{lem:Tucker:conditionsFi} and Lemma~\ref{lem:Klem-errbound},
		$$ \| \bfU_i^1 \bfU_i^{1,\top} \textbf{Mat}_i(A_1) - \textbf{Mat}_i(A_1) \| \leq \vartheta ,\qquad i=1,\dots, d  . $$
		The norm is invariant under tensorization and so the bound is equivalent to
		$$ \|  A_1 \times_i \bfU_i^1 \bfU_i^{1,\top} - A_1  \| \leq \vartheta ,\qquad i=1,\dots, d  . $$
		To conclude, we observe 
		\begin{equation*}
		\begin{aligned}
		&\|  A_1 \bigtimes_{i=1}^d \bfU_i^1 \bfU_i^{1,\top} - A_1 \| 
			\\
		&\leq \|  A_1 \bigtimes_{i=1}^d \bfU_i^1 \bfU_i^{1,\top}-A_1 \bigtimes_{i=1}^{d-1} \bfU_i^1 \bfU_i^{1,\top}
		+A_1 \bigtimes_{i=1}^{d-1} \bfU_i^1 \bfU_i^{1,\top}- A_1 \| \\
		&\leq \|  A_1 \bigtimes_{i=1}^d \bfU_i^1 \bfU_i^{1,\top}-A_1 \bigtimes_{i=1}^{d-1} \bfU_i^1 \bfU_i^{1,\top}\|
		+ \| A_1 \bigtimes_{i=1}^{d-1} \bfU_i^1 \bfU_i^{1,\top}- A_1 \| \\
		&\leq \| (A_1 \times_d \bfU_i^1 \bfU_i^{1,\top}- A_1) \bigtimes_{i=1}^{d-1} \bfU_i^1 \bfU_i^{1,\top}\| 
		+ \| A_1 \bigtimes_{i=1}^{d-1} \bfU_i^1 \bfU_i^{1,\top}- A_1 \| \\
		&\leq \| A_1 \times_d \bfU_i^1 \bfU_i^{1,\top}- A_1 \| 
		+ \| A_1 \bigtimes_{i=1}^{d-1} \bfU_i^1 \bfU_i^{1,\top}- A_1 \| \\
		&\leq \vartheta + \| A_1 \bigtimes_{i=1}^{d-1} \bfU_i^1 \bfU_i^{1,\top}- A_1 \| ,
		\end{aligned}
		\end{equation*}
		and the result follows by an iteration of this argument. 
	\qed \end{proof}
	
	We are now in a  position to analyse the local error produced after one time step of the integrator. 
	
	\begin{lemma}[Local error] If $A_0 = Y_0$, the following local error bound holds for the new Tucker tensor integrator:
		$$ \| Y_1 - A_1 \| \leq \hat c \,h(BLh+\eps) , $$
		where $\hat c$ only depends on $d$ and a bound of $hL$. In particular, the constant is independent of singular values of the exact or approximate solution. 
	\end{lemma}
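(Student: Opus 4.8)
The plan is to adapt the proof of the matrix local error Lemma~\ref{lem:loc-err}, replacing the sandwiched matrix $\bfU_1^\top\bfA(t)\bfV_1$ by the multilinear projection of $A(t)$ onto the updated bases. First I would write $Y_1 = C_1\bigtimes_{i=1}^d\bfU_i^1$ and use that $C_1\bigtimes_{i=1}^d\bfU_i^1 - A_1\bigtimes_{i=1}^d\bfU_i^1\bfU_i^{1,\top} = \bigl(C_1 - A_1\bigtimes_{i=1}^d\bfU_i^{1,\top}\bigr)\bigtimes_{i=1}^d\bfU_i^1$ together with the invariance of the Euclidean tensor norm under mode multiplication by matrices with orthonormal columns to split
\begin{equation*}
\| Y_1 - A_1 \| \le \Bigl\| C_1 - A_1\bigtimes_{i=1}^d\bfU_i^{1,\top}\Bigr\| + \Bigl\| A_1\bigtimes_{i=1}^d\bfU_i^1\bfU_i^{1,\top} - A_1 \Bigr\|.
\end{equation*}
Lemma~\ref{lem:aux-ten} bounds the second term by $d\vartheta$, so the task reduces to estimating $\| C_1 - A_1\bigtimes_{i=1}^d\bfU_i^{1,\top}\|$.

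Second, I would introduce $\widetilde C(t) := A(t)\bigtimes_{i=1}^d\bfU_i^{1,\top}$ for $t_0\le t\le t_1$ and write $A(t) = \widetilde C(t)\bigtimes_{i=1}^d\bfU_i^1 + R(t)$ with remainder $R(t) := A(t) - A(t)\bigtimes_{i=1}^d\bfU_i^1\bfU_i^{1,\top}$. Since $Y\mapsto Y\bigtimes_{i=1}^d\bfU_i^1\bfU_i^{1,\top}$ is a composition of $d$ orthogonal projections and hence non-expansive in the Euclidean norm, Lemma~\ref{lem:aux-ten} and $\| A(t)-A_1\|\le\int_{t_0}^{t_1}\|F(s,A(s))\|\,ds\le Bh$ give $\| R(t)\|\le\| R(t)-R(t_1)\|+\| R(t_1)\|\le 2\| A(t)-A_1\|+d\vartheta\le 2Bh+d\vartheta$. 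Writing $F(t,A(t)) = F\bigl(t,\widetilde C(t)\bigtimes_{i=1}^d\bfU_i^1\bigr) + D(t)$ with the defect $D(t):=F\bigl(t,\widetilde C(t)\bigtimes_{i=1}^d\bfU_i^1+R(t)\bigr)-F\bigl(t,\widetilde C(t)\bigtimes_{i=1}^d\bfU_i^1\bigr)$, the Lipschitz bound on $F$ yields $\| D(t)\|\le L\| R(t)\|\le L(2Bh+d\vartheta)$.

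Third, I would compare the two core differential equations
\begin{align*}
\dot{\widetilde C}(t) &= F\Bigl(t,\widetilde C(t)\bigtimes_{i=1}^d\bfU_i^1\Bigr)\bigtimes_{i=1}^d\bfU_i^{1,\top} + D(t)\bigtimes_{i=1}^d\bfU_i^{1,\top}, & \widetilde C(t_0) &= A_0\bigtimes_{i=1}^d\bfU_i^{1,\top},\\
\dot{C}(t) &= F\Bigl(t, C(t)\bigtimes_{i=1}^d\bfU_i^1\Bigr)\bigtimes_{i=1}^d\bfU_i^{1,\top}, & C(t_0) &= C_0\bigtimes_{i=1}^d\bfM_i,
\end{align*}
the first obtained by differentiating the definition of $\widetilde C$ and inserting the splitting of $F(t,A(t))$, the second being the core step of the integrator; one has $\widetilde C(t_1)=A_1\bigtimes_{i=1}^d\bfU_i^{1,\top}$ and $C(t_1)=C_1$ by construction, and the initial values coincide since $A_0=Y_0=C_0\bigtimes_{i=1}^d\bfU_i^0$ gives $A_0\bigtimes_{i=1}^d\bfU_i^{1,\top}=C_0\bigtimes_{i=1}^d(\bfU_i^{1,\top}\bfU_i^0)=C_0\bigtimes_{i=1}^d\bfM_i$. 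Using once more that mode multiplication by $\bfU_i^1$ and $\bfU_i^{1,\top}$ does not increase the Euclidean norm, the Lipschitz property gives $\frac{d}{dt}\| \widetilde C(t)-C(t)\|\le L\| \widetilde C(t)-C(t)\|+\| D(t)\|$, and Gronwall's inequality yields $\| C_1-A_1\bigtimes_{i=1}^d\bfU_i^{1,\top}\|\le e^{Lh}L(2Bh+d\vartheta)h$. Combined with the first step and the definition \eqref{vartheta} of $\vartheta$ with $\delta=0$ (so that $\vartheta=O(h(BLh+\eps))$ with an absolute constant times $e^{Lh}$), this gives $\| Y_1-A_1\|\le\hat c\,h(BLh+\eps)$ with $\hat c$ depending only on $d$ and a bound of $hL$. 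The only points that need real care are the non-expansiveness of the multilinear projection in the Euclidean tensor norm -- which is exactly what makes the bound on $R(t)$ and the Gronwall step go through as in the matrix proof -- and the bookkeeping identity $C_0\bigtimes_{i=1}^d\bfM_i=A_0\bigtimes_{i=1}^d\bfU_i^{1,\top}$ aligning the two initial values.
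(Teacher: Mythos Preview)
Your proposal is correct and is precisely the adaptation of the matrix local error proof (Lemma~\ref{lem:loc-err}) to the Tucker case that the paper indicates; the paper in fact omits the details and refers to the matrix lemma together with Lemmas~\ref{lem:Tucker:conditionsFi} and~\ref{lem:aux-ten}, which is exactly what you carry out. The key bookkeeping points you flag --- non-expansiveness of the multilinear projection and the identity $C_0\bigtimes_{i=1}^d\bfM_i=A_0\bigtimes_{i=1}^d\bfU_i^{1,\top}$ --- are the right ones.
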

	
	We omit the proof because, up to minor modifications analogous to those in the proof of Lemma~\ref{lem:loc-err}, the result follows as in \cite[Section 5.3]{CeL20} on using the two previous lemmas.

	
	Using the Lipschitz continuity of the function $F$, we pass from the local to the global errors by the standard argument of Lady Windermere's fan \cite[Section II.3]{HairerNorsettWanner:ODE_BOOK1} and thus conclude the proof of Theorem~\ref{thm:new-robust-ten}.
	
	\subsection{Symmetric and anti-symmetric low-rank Tucker tensors}
 For  permutations $\sigma\in S_d$, we use the notation $\sigma(Y)=\bigl(y_{i_{\sigma(1)},\dots,i_{\sigma(d)}} \bigr)$	for tensors $Y = (y_{i_1,\dots,i_d})\in \R^{n \times\dots\times n}$ of order $d$. A tensor $Y$ is called {\it symmetric} if $\sigma(Y)=Y$ for all $\sigma\in S_d$, and is called {\it anti-symmetric} if
$\sigma(Y)= (-1)^{\mathrm{sign}(\sigma)} \,Y$ for all $\sigma\in S_d$.
 
 	We now assume that the right-hand side function in \eqref{eq:fullEq-ten} is such that one of the following conditions holds: For all permutations $\sigma\in S_d$ and all tensors $Y \in \R^{n \times\dots\times n}$ of order $d$,
	\begin{equation} \label{F-sym-ten}
		\sigma\bigl( F(t,\sigma(Y))\bigr) =  F(t, Y) 
	\end{equation}
	or
	\begin{equation} \label{F-antisym-ten}
		\sigma\bigl( F(t,\sigma(Y))\bigr) =  (-1)^{\mathrm{sign}(\sigma)} \,F(t, Y) 
	\end{equation}
	Under these conditions, solutions to \eqref{eq:fullEq-mat} with symmetric or anti-symmetric initial data remain symmetric or anti-symmetric, respectively, for all times.
	We also have preservation of (anti-)symmetry for the new integrator, which does not hold for the projector-splitting integrator.
	
	\begin{theorem}
		Let $Y_0 $ be  symmetric or anti-symmetric and assume that the function $F$ satisfies property~$(\ref{F-sym-ten})$ or $(\ref{F-antisym-ten})$, respectively.
		Then, the approximation $Y_1$ obtained after one time step of the new integrator is symmetric or anti-symmetric, respectively.
	\end{theorem}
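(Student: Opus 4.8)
The plan is to mirror the matrix-case proof and to work throughout in a \emph{symmetric Tucker representation}. First note that a symmetric (resp.\ anti-symmetric) tensor of multilinear rank $(r_1,\dots,r_d)$ necessarily has $r_1=\dots=r_d=:r$, and that it admits a factorization $Y_0 = C_0 \bigtimes_{i=1}^d \bfU^0$ with one and the same orthonormal basis matrix $\bfU^0\in\R^{n\times r}$ in every mode and with a symmetric (resp.\ anti-symmetric) core $C_0$ of full multilinear rank: take $\bfU^0$ to be an orthonormal basis of the (common) column space of the matricizations $\textbf{Mat}_i(Y_0)$ and set $C_0 = Y_0\bigtimes_i\bfU^{0,\top}$. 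Since the result $Y_1$ of one step of the integrator is independent of the Tucker representation chosen for $Y_0$ (each substep is invariant under replacing $\bfU_i^0$ by $\bfU_i^0\textbf{P}$ with an orthogonal $\textbf{P}$ together with the compensating change of $C_0$, and under the nonuniqueness of the QR factorizations), we may and do assume that $Y_0$ is given in this form.

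The core of the argument is to show that the first part of the integrator produces one and the same updated basis in every mode, i.e.\ $\bfU_i^1=\bfU^1$ and $\bfM_i=\bfM$ for all $i=1,\dots,d$. For an (anti-)symmetric core $C_0$, the $i$-mode matricization $\textbf{Mat}_i(C_0)$ is obtained from $\textbf{Mat}_1(C_0)$ by a permutation of the column index — together with an overall sign $(-1)^{\mathrm{sign}(\sigma_i)}$ in the anti-symmetric case — where $\sigma_i\in S_d$ is the mode permutation moving mode $1$ to mode $i$; correspondingly, the matrix function $\bfF_i(t,\cdot)=\textbf{Mat}_i\circ F(t,\cdot)\circ\mathit{Ten}_i$ is related to $\bfF_1(t,\cdot)$ across modes through the same $\sigma_i$; and property \eqref{F-sym-ten} (resp.\ \eqref{F-antisym-ten}) applied to $\sigma_i$ then turns the differential equation for $\bfK_i$ into the differential equation for $\bfK_1$ in the unknown $\bfK_i$, with the same initial value $\bfU^0\bfS^0$ — this is the $d$-mode analogue of the identity $\bfG(t,\bfY)=\bfF(t,\bfY^\top)^\top$ used in the matrix proof. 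Hence $\bfK_i(t)\equiv\bfK_1(t)$, up to a sign in the anti-symmetric case that is absorbed by the nonuniqueness of the QR factorization exactly as $\bfV_1=\bfU_1$ was obtained for symmetric matrices. Consequently $\bfU_i^1=\bfU^1$ and $\bfM_i=\bfM$ do not depend on $i$.

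It remains to treat the core update. Its initial value $C_0\bigtimes_{i=1}^d\bfM$ is symmetric (resp.\ anti-symmetric) because multiplying every mode of a symmetric (resp.\ anti-symmetric) tensor by one and the same matrix preserves this property. The right-hand side of the core differential equation, $C\mapsto F\bigl(t,C\bigtimes_{i=1}^d\bfU^1\bigr)\bigtimes_{i=1}^d\bfU^{1,\top}$, maps the linear subspace of symmetric (resp.\ anti-symmetric) tensors into itself: the contractions $C\mapsto C\bigtimes_i\bfU^1$ and $B\mapsto B\bigtimes_i\bfU^{1,\top}$ preserve the subspace by the same argument, and $F(t,\cdot)$ maps symmetric (resp.\ anti-symmetric) tensors to symmetric (resp.\ anti-symmetric) ones by \eqref{F-sym-ten} (resp.\ \eqref{F-antisym-ten}) applied to all $\sigma\in S_d$ — this is precisely the statement that solutions of \eqref{eq:fullEq-ten} with (anti-)symmetric initial data stay (anti-)symmetric. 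Since the vector field of the core equation is tangent to this subspace and $C(t_0)$ lies in it, so does $C_1=C(t_1)$; therefore $Y_1=C_1\bigtimes_{i=1}^d\bfU^1$ is the image of an (anti-)symmetric core under one and the same basis matrix in every mode and is symmetric (resp.\ anti-symmetric).

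The step I expect to be the main obstacle is the second one: choosing the matricization and tensorization conventions so that the relations between $\textbf{Mat}_i(C_0)$, the functions $\bfF_i$ and the mode permutations $\sigma_i$ all hold with one consistent permutation, and then tracking the signs in the anti-symmetric case and the nonuniqueness of the QR factorizations carefully enough that the $d$ substep differential equations genuinely coincide. Once this bookkeeping is in place — it generalizes the $d=2$ argument carried out for the matrix case in \cite{CeL20} — the two subspace-invariance statements needed for the core step are routine.
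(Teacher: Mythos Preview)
Your plan is correct and is precisely the tensor analogue the paper has in mind: the paper omits the proof with the remark that it ``is similar to the matrix case,'' and your outline --- pass to a symmetric Tucker representation $Y_0=C_0\bigtimes_i\bfU^0$, use the permutation equivariance of $F$ to identify all $d$ K-step equations (hence $\bfU_i^1=\bfU^1$, $\bfM_i=\bfM$), then observe that the core equation has (anti-)symmetric data and vector field --- is exactly that extension. Your caveat about the bookkeeping (column permutations in $\textbf{Mat}_i$ and sign/QR nonuniqueness in the anti-symmetric case) is well placed but routine, just as in the $d=2$ argument.
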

	
	The simple proof is similar to the matrix case and is therefore omitted.
	
		 Under condition~(\ref{F-sym-ten}) or~(\ref{F-antisym-ten}), the new integrator coincides with the (anti)-symmetry preserving low-rank Tucker tensor integrator of~\cite{CeL20}.

	\section{Numerical Experiments}
	In this section, we present results of different numerical experiments. 
	The experiments were done using Matlab R2017a software with TensorLab package v3.0~\cite{vervliet2016tensorlab}. 
	
	\subsection{Robustness with respect to small singular values}
	In the first example, the time-dependent matrix is given explicitly as
	$$ \textbf{A}(t) = \big( e^{t\textbf{W}_1} \big) e^t \textbf{D} \big( e^{t\textbf{W}_2} \big)^\top, \quad 0 \leq t \leq 1 \ .$$
	The matrix $\textbf{D} \in \R^{N \times N}$ is diagonal with entries $d_j = 2^{-j}$. The matrices $\textbf{W}_1 \in \R^{N \times N}$ and $\textbf{W}_2 \in \R^{N \times N}$ are skew-symmetric and randomly generated. We note that $e^t 2^{-j}$ are the singular values of $A(t)$. We choose $N=100$ and final time $T=1$.  We compare the new low-rank integrator presented in Section~\ref{sec:Par} with a numerical solution obtained with the classical fourth-order explicit Runge-Kutta method applied to the system of differential equations for dynamical low-rank approximation as derived in \cite{KochLubich07}. 
	
	The numerical results for different ranks are shown in Figure \ref{fig:explicitmatrix}. In contrast to the Runge--Kutta method, the new low-rank integrator does not require a step-size restriction in the presence of small singular values. The same favourable behaviour was shown for the projector-splitting integrator in \cite{KieriLubichWalach}.
	
	\begin{figure}
		\includegraphics[height=6cm, width=\textwidth]{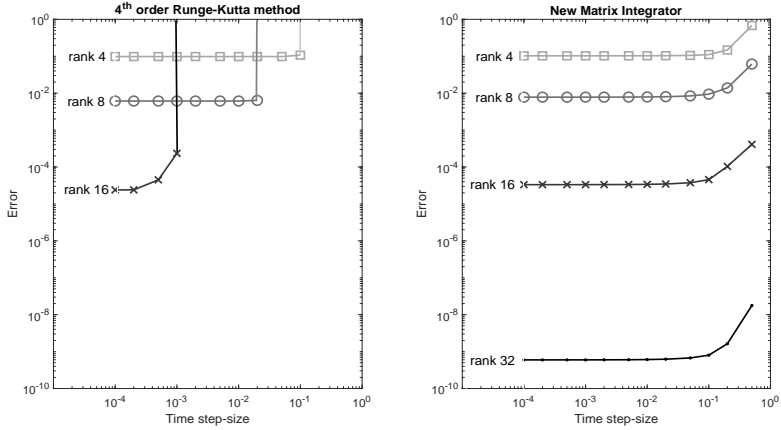}
		\caption{Comparison of the explicit Runge Kutta method (left) and the proposed new integrator (right) for	different approximation ranks and step sizes in the case of a given time-dependent matrix. }
		\label{fig:explicitmatrix}
	\end{figure}

	\subsection{Error behaviour}
	
	In the second example, we integrate a (non-stiff) discrete Schr\"odinger equation in imaginary time,
	$$ \dot{Y} = -\text{H}[Y], \quad Y(t_0) = C_0 \bigtimes_{i=1}^d \textbf{U}_i^0 \ .$$
	Here, 
	\begin{align*}
		& \text{H}[Y] = -\frac{1}{2}\sum_{j=1}^{d} \big( Y \times_j \textbf{D}\big) + Y \bigtimes_{i=1}^d \textbf{V}_{cos}  \in \R^{N \times \dots \times N} ,
		\\
		& \textbf{D} = \texttt{tridiag}(-1,2,-1) \in \R^{N \times N} , 
		\\
		& \textbf{V}_\text{cos} := \text{diag} \{ 1- \cos( \frac{2 \pi j }{N} ) \}, \quad j=-N/2, \dots, N/2-1 \ . 
	\end{align*}
	The function H arises from the Hamiltonian $\mathcal{H} = -\frac{1}{2}\Delta_{\mathrm{discrete}} + V(x)$ on a equidistant space grid with the torsional potential $V(x_1, \dots, x_d) = \prod_{i=1}^d (1-\text{cos}(x_i))$.
	
	For each $i=1,\dots,d$, the orthonormal matrices $\textbf{U}_i^0 \in \R^{N \times N}$ are randomly generated. The core tensor $C_0 \in \R^{N \times N \times N}$ has only non-zero diagonal elements set equal to $ (C_0)_{jjj} = 10^{-j} $ for $j=1, \dots N$ in the case $d=3$, and analogously in the matrix case $d=2$.
	
	The reference solution was computed with the Matlab solver \texttt{ode45} and stringent tolerance parameters \textsc{\{'RelTol', 1e-10, 'AbsTol', 1e-10\} }. The differential equations appearing in the definition of a step of the new matrix and Tucker integrators have all been solved either with a single step of a second- or fourth-order explicit Runge--Kutta method. 
	
		\begin{figure}[t]
		\includegraphics[height=5cm, width=\textwidth]{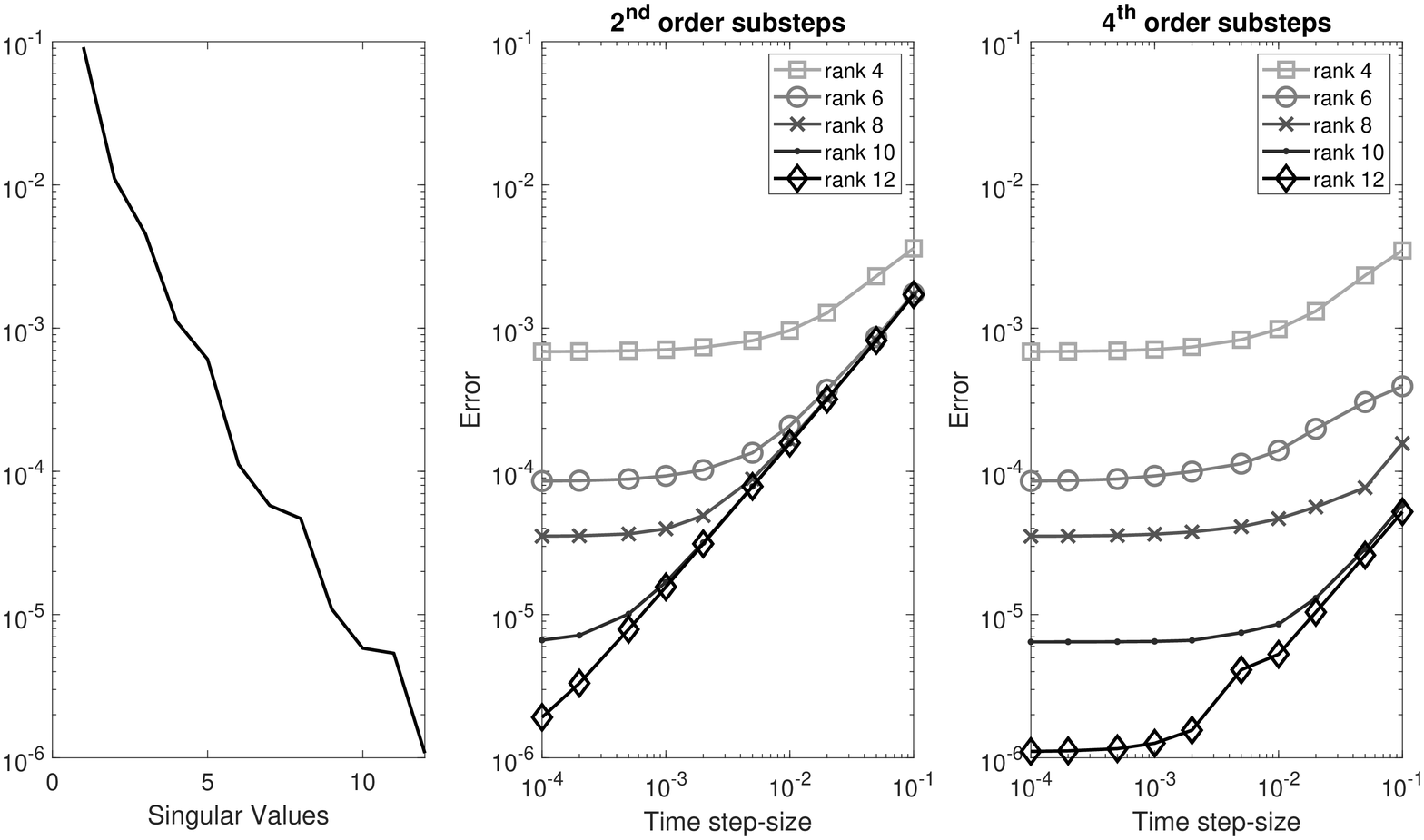}
		\caption{First twelve singular values of the reference solution at time $T=0.1$ and approximation errors for different ranks, time-integration methods in the substeps of the new matrix integrator, and step-sizes for the matrix differential equation ($d=2$).}
		\label{fig:MatDiffEq}
	\end{figure}

	\begin{figure}[h]
		\includegraphics[height=5cm, width=\textwidth]{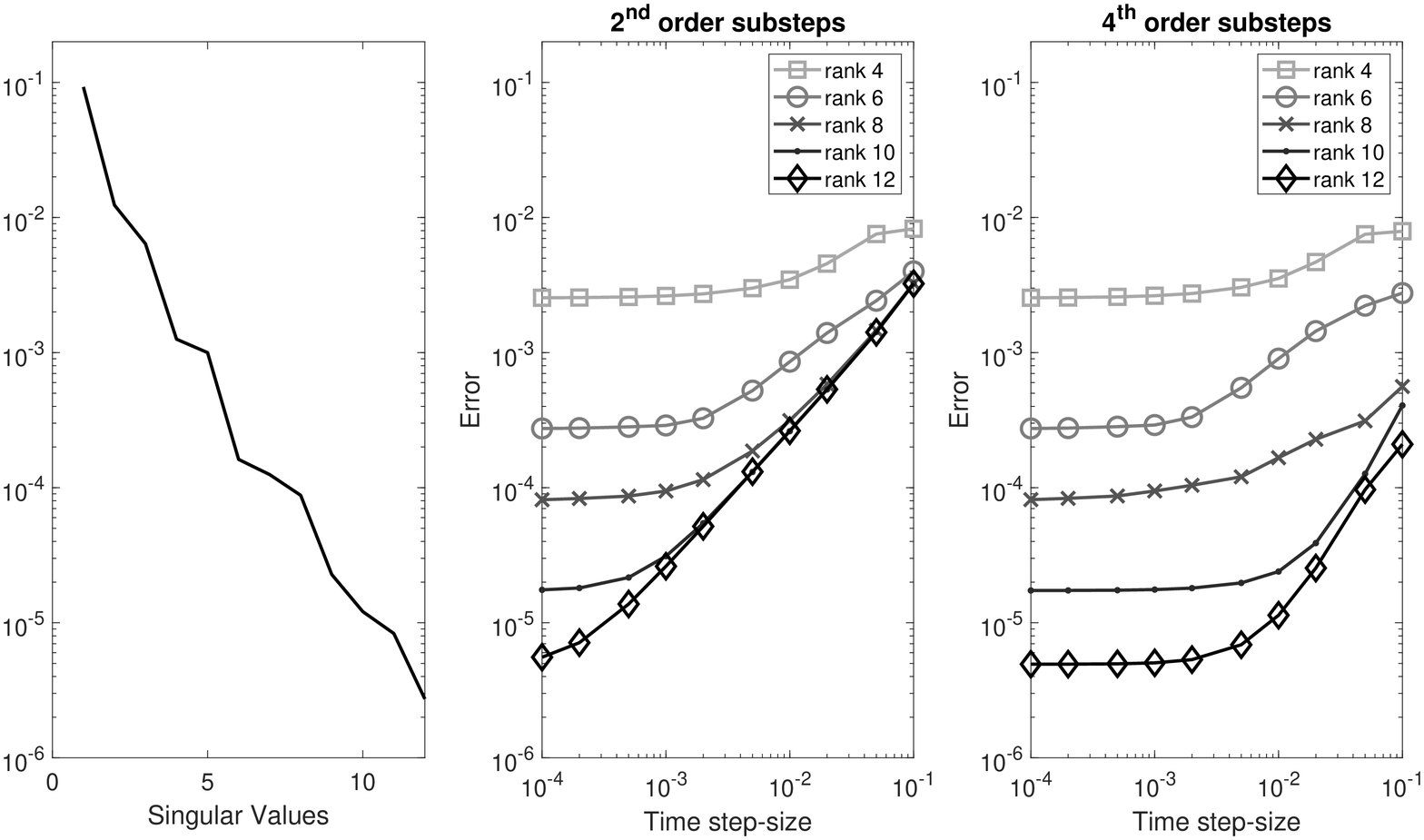}
		\caption{First twelve singular values of the matricisization in first mode of the reference solution at time $T=0.1$ and approximation errors for different multi-linear ranks, time-integration methods in the substeps of the new fixed-rank Tucker tensor integrator and step-sizes for the tensor differential equation ($d=3$).}
		\label{fig:TenDiffEq}
	\end{figure} 
	 
	We choose $N=100$, final time $T=0.1$ and $d=2,3$. The multi-linear rank is chosen such that $r_1 = r_2 = \dots =  r_d$.
	The singular values of the matricization in the first mode of the reference solution and the absolute errors $\| Y_n -A(t_n) \|_F$ at time $t_n=T$ of the approximate solutions for different ranks, calculated with different step-sizes and different time integration methods, are shown in Figure~\ref{fig:MatDiffEq} for the matrix case($d=2$), and in  Figure~\ref{fig:TenDiffEq} for the tensor case($d=3$). The figures clearly show that solving the substeps with higher accuracy allows us to take larger step-sizes to achieve a prescribed error.

	\subsection{Comparison with the matrix projector-splitting integrator over different ranks} 
	In the last example, we compare the matrix projector splitting integrator with the new matrix integrator of Section~\ref{sec:Par}. Here, the complex case is considered: in the definition of the sub-problems appearing in the new matrix integrator, it is sufficient to replace the transpose with the conjugate transpose.
	
	We consider a Schr\"odinger equation as in~\cite[Section 4.3]{KieriLubichWalach},
	\begin{align*}
		&i \partial_t u(x,t) = -\frac{1}{2} \Delta u(x,t) +\frac{1}{2} x^\top \! \textbf{A} x\, u(x,t), \quad x \in \mathbb{R}^2, t>0,
		\\
		& u(x,0) = \pi^{-\frac{1}{2}} \text{exp} \big(\frac{1}{2} x_1^2 + \frac{1}{2}(x_2-1)^2 \big),
		\\
		& \textbf{A} = \begin{pmatrix} 2 &-1\\ -1 & 3 \end{pmatrix} .
	\end{align*}
	The problem is discretized with a Fourier collocation method with a grid of $N \times N$ points; the solution is essentially supported within $\Omega = [-7.5, 7.5]^2	$. We choose the final time $T=5$ and $N=128$, which makes the problem moderately stiff.
	First, we compute  a reference solution with an Arnoldi method and a tiny time-step size $h=10^{-4}$.
	Then, for each rank from $1$ until $20$, we compute a low-rank approximation with the matrix projector splitting integrator and the new matrix integrator. 
	The lower-dimensional sub-problems appearing in the definition of the two integrators are solved with an Arnoldi method and time-step size $h=0.005$. For each rank, the absolute error in Frobenius norm of the two given approximations at the final time $T=5$, with respect to the reference solution, are shown in Figure~\ref{fig:NewvsKSL}.
	
	\begin{figure}
		\centering
		\includegraphics[width=\textwidth]{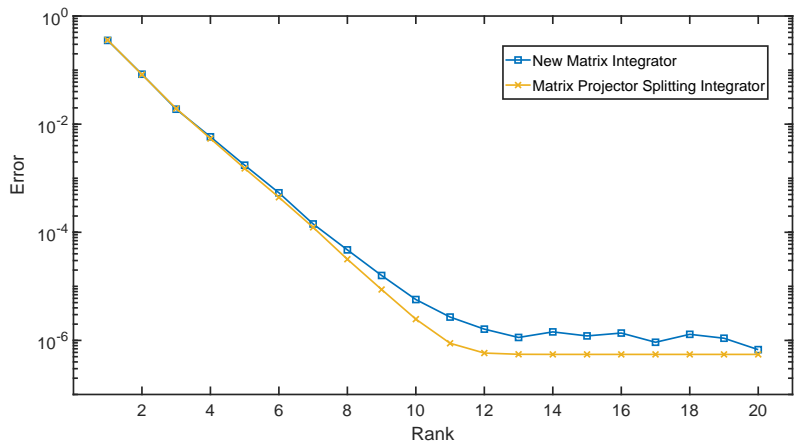}
		\caption{Approximation errors for different ranks at final time $T=5$ of the low-rank approximation computed with the matrix projector splitting integrator and the new matrix integrator.}
		\label{fig:NewvsKSL}
	\end{figure}
	
	\begin{acknowledgements}
		The last numerical example is based upon the original source code of \cite[Section 4.3]{KieriLubichWalach}; we would like to thank Hanna Walach for kindly providing it. 
		
		This work was funded by the Deutsche Forschungsgemeinschaft (DFG, German Research Foundation) --- Project-ID 258734477 --- SFB 1173. 
	\end{acknowledgements}
	
	\bibliographystyle{abbrv}
	\bibliography{dlrpar} 

\begin{thebibliography}{10}

\bibitem{Bachmayer-etal:parabolic}
M.~Bachmayr, H.~Eisenmann, E.~Kieri, and A.~Uschmajew.
\newblock Existence of dynamical low-rank approximations to parabolic problems.
\newblock {\em arXiv preprint arXiv:2002.12197}, 2020.

\bibitem{Beck-etal:MCTDH}
M.~H. Beck, A.~J{\"a}ckle, G.~A. Worth, and H.-D. Meyer.
\newblock The multiconfiguration time-dependent {H}artree ({MCTDH}) method: a
  highly efficient algorithm for propagating wavepackets.
\newblock {\em Physics reports}, 324(1):1--105, 2000.

\bibitem{BeckMeyer}
M.~H. Beck and H.-D. Meyer.
\newblock An efficient and robust integration scheme for the equations of
  motion of the multiconfiguration time-dependent {H}artree ({MCTDH}) method.
\newblock {\em Z. Physik D}, 42(2):113--129, 1997.

\bibitem{CeL20}
G.~Ceruti and C.~Lubich.
\newblock Time integration of symmetric and anti-symmetric low-rank matrices
  and {T}ucker tensors.
\newblock {\em BIT Numer. Math.}, 60:591--614, 2020.

\bibitem{CeLW20}
G.~Ceruti, C.~Lubich, and H.~Walach.
\newblock Time integration of tree tensor networks.
\newblock {\em arXiv preprint arXiv:2002.11392. To appear in SIAM J. Numer.
  Anal.}, 2020.

\bibitem{DeLauthawer:HOSVD}
L.~De~Lathauwer, B.~De~Moor, and J.~Vandewalle.
\newblock A multilinear singular value decomposition.
\newblock {\em SIAM J. Matrix Anal. Appl.}, 21(4):1253--1278, 2000.

\bibitem{HaLOVV16}
J.~Haegeman, C.~Lubich, I.~Oseledets, B.~Vandereycken, and F.~Verstraete.
\newblock Unifying time evolution and optimization with matrix product states.
\newblock {\em Phys. Rev. B}, 94(16):165116, 2016.

\bibitem{HairerNorsettWanner:ODE_BOOK1}
E.~Hairer, S.~P. N{\o}rsett, and G.~Wanner.
\newblock {\em Solving ordinary differential equations. {I}. Nonstiff
  problems}, volume~8 of {\em Springer Series in Computational Mathematics}.
\newblock Springer-Verlag, Berlin, second edition, 1993.

\bibitem{KhoromskijOseledetsSchneider}
B.~N. Khoromskij, I.~V. Oseledets, and R.~Schneider.
\newblock Efficient time-stepping scheme for dynamics on {TT}-manifolds.
\newblock {\em Preprint 2012-24, MPI Math. Naturwiss. Leipzig}, 2012.

\bibitem{KieriLubichWalach}
E.~Kieri, C.~Lubich, and H.~Walach.
\newblock Discretized dynamical low-rank approximation in the presence of small
  singular values.
\newblock {\em SIAM J. Numer. Anal.}, 54(2):1020--1038, 2016.

\bibitem{KochLubich07}
O.~Koch and C.~Lubich.
\newblock Dynamical low-rank approximation.
\newblock {\em SIAM J. Matrix Anal. Appl.}, 29(2):434--454, 2007.

\bibitem{KochLubich10}
O.~Koch and C.~Lubich.
\newblock Dynamical tensor approximation.
\newblock {\em SIAM J. Matrix Anal. Appl.}, 31(5):2360--2375, 2010.

\bibitem{KoldaBader:TensorDec}
T.~G. Kolda and B.~W. Bader.
\newblock Tensor decompositions and applications.
\newblock {\em SIAM Rev.}, 51(3):455--500, 2009.

\bibitem{Lubich:MCTDH}
C.~Lubich.
\newblock Time integration in the multiconfiguration time-dependent {H}artree
  method of molecular quantum dynamics.
\newblock {\em Appl. Math. Res. Express. AMRX}, 2015(2):311--328, 2015.

\bibitem{LubichOseledets}
C.~Lubich and I.~V. Oseledets.
\newblock A projector-splitting integrator for dynamical low-rank
  approximation.
\newblock {\em BIT}, 54(1):171--188, 2014.

\bibitem{LuOV15}
C.~Lubich, I.~V. Oseledets, and B.~Vandereycken.
\newblock Time integration of tensor trains.
\newblock {\em SIAM J. Numer. Anal.}, 53(2):917--941, 2015.

\bibitem{LubichVandWalach}
C.~Lubich, B.~Vandereycken, and H.~Walach.
\newblock Time integration of rank-constrained {T}ucker tensors.
\newblock {\em SIAM J. Numer. Anal.}, 56(3):1273--1290, 2018.

\bibitem{vervliet2016tensorlab}
N.~Vervliet, O.~Debals, L.~Sorber, M.~Van~Barel, and L.~De~Lathauwer.
\newblock Tensorlab 3.0.
\newblock {\em available online, URL: www. tensorlab. net}, 2016.

\end{thebibliography}
	
\end{document}